\theoremstyle{plain}
\newtheorem{thm}{Theorem}[section]
\newtheorem{lem}[thm]{Lemma}
\theoremstyle{definition}
\theoremstyle{remark}
\newtheorem{rmk}{Remark}
\numberwithin{equation}{section}
\numberwithin{figure}{section}
\numberwithin{table}{section}
\newcommand{\wt}{\operatorname{wt}}
\newcommand{\M}{\operatorname{M}}
\newcommand{\up}{\operatorname{up}}
\newcommand{\down}{\operatorname{down}}
\newcommand{\level}{\operatorname{level}}
\newcommand{\area}{\operatorname{area}}
\DeclareMathOperator{\AR}{AR}
\title{Generating function of the tilings of Aztec rectangle with holes}
\author{Tri Lai\footnote{This research was supported in part by the Institute for Mathematics and its Applications with funds provided by the National Science Foundation.}\\
Institute for Mathematics and its Applications\\
Minneapolis, MN 55455\\
\texttt{tmlai@ima.umn.edu}}
\date{\small Mathematics Subject Classifications: 05A15,  05B45}
\begin{document}

\maketitle

\begin{abstract}
We consider a generating function of the domino tilings of an Aztec rectangle with several boundary unit squares removed. Our generating function involves two statistics: the rank of the tiling and half number of vertical dominoes as in the Aztec diamond theorem by Elkies, Kuperberg, Larsen and Propp. In addition, our work deduces a combinatorial explanation for an interesting connection between the number of lozenge tilings of a semihexagon and the number of domino tilings of an Aztec rectangle.

\bigskip\noindent \textbf{Keywords:} perfect matchings, tilings, dual graph,   Aztec diamonds,  Aztec rectangles.
\end{abstract}

\section{Introduction}
A lattice partitions the plane into fundamental regions. A (lattice) \textit{region} considered in this paper is a finite connected union of fundamental regions. A \textit{tile} is the union of any two fundamental regions sharing an edge. A \textit{tiling} of a region is a covering of the region by tiles so that there are no gaps or overlaps.

The \textit{Aztec diamond} of order $n$ is the union of all unit squares inside the  contour $|x|+|y|= n+1$ (see Figure \ref{aztecdiamond} for several first Aztec diamonds).  For each  (domino) tiling $T$ of the Aztec diamond, we denote by $v(T)$ haft number of vertical dominoes in $T$, and $r(T)$ the \textit{rank} of $T$ that is defined as follows. The \emph{minimal tiling} $T_0$ consisting of all horizontal dominoes has rank 0; and the rank $r(T)$ of $T$ is the minimal number of \emph{elementary moves} required to reach $T$ from $T_0$ (see Figure \ref{Drawdominob}(a) for two types of the elementary moves, and Figures \ref{Drawdominob}(b)--(e) for several domino tilings of the Aztec diamond of order 2 together with their ranks).
\begin{figure}\centering
\includegraphics[width=12cm]{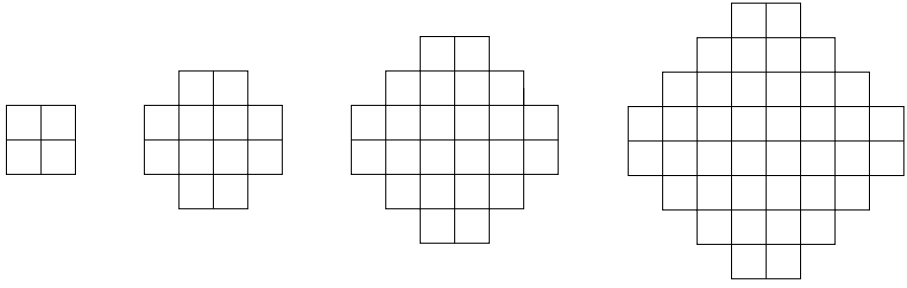}
\caption{From left to right, the Aztec diamonds of order $1$, $2$, $3$ and $4$. }
\label{aztecdiamond}
\end{figure}

\begin{figure}\centering
\includegraphics[width=10cm]{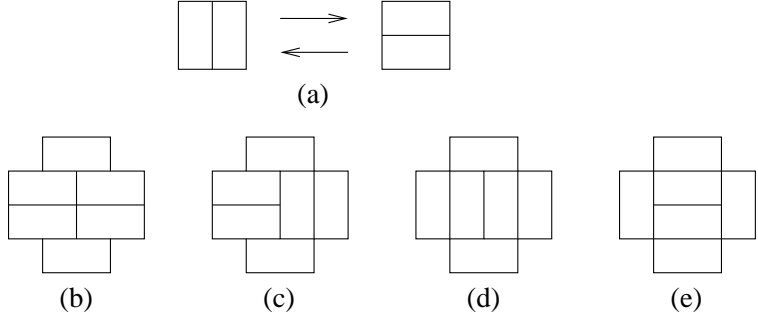}
\caption{(a) The elementary moves: rotations of a $2\times 2$ block of two vertical or horizontal dominoes. (b) The minimal tiling of the Aztec diamond of order $2$. (c) A tiling of rank 1. (d) A tiling of rank 2. (e) A tiling of rank 3.}
\label{Drawdominob}
\end{figure}

Elkies, Kuperberg, Larsen and Propp \cite{Elkies} proved a simple product formula for the generating function of the tilings of an Aztec diamond.
\begin{thm}[Aztec Diamond Theorem \cite{Elkies}]\label{Aztecthm}
For positive integer $n$
\begin{equation}
\sum_{T}q^{r(T)}t^{v(T)}=\prod_{k=0}^{n-1}(1+tq^{2k+1})^{n-k},
\end{equation}
where the sum is taken over all tilings $T$ of the Aztec diamond region of order $n$.
\end{thm}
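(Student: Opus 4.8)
The plan is to prove the formula by induction on $n$ via the domino-shuffling operation of \cite{Elkies}, carrying both statistics through the shuffle. Write $AD_n$ for the Aztec diamond of order $n$ and put $a_n=a_n(q,t):=\sum_{T}q^{r(T)}t^{v(T)}$, the sum over tilings of $AD_n$. Since the claimed product telescopes,
\begin{equation*}
\prod_{k=0}^{n-1}(1+tq^{2k+1})^{n-k}=\Bigl(\prod_{k=0}^{n-1}(1+tq^{2k+1})\Bigr)\cdot\prod_{k=0}^{n-2}(1+tq^{2k+1})^{n-1-k},
\end{equation*}
it suffices to prove $a_0=1$ (the empty tiling) together with the recurrence
\begin{equation*}
a_n(q,t)=\Bigl(\prod_{k=0}^{n-1}(1+tq^{2k+1})\Bigr)\,a_{n-1}(q,t)\qquad(n\ge 1).
\end{equation*}
The base case $a_0=1$ is immediate, and $a_1=q^0t^0+q^1t^1=1+tq$ is the direct check on the two tilings of the $2\times 2$ square $AD_1$.

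To get the recurrence I would set up the shuffling bijection and track the two statistics. Recall that one two-colors $AD_n$ and labels each domino of a tiling $N$, $S$, $E$, or $W$ by the color pattern it covers, the label recording the direction in which the domino translates under the shuffle. The shuffle deconstructs a tiling $T$ of $AD_n$ into a tiling $T'$ of $AD_{n-1}$ together with $n$ independent ``new'' $2\times 2$ cells, each filled by two horizontal or by two vertical dominoes; running this in reverse rebuilds $T$, giving a bijection (consistency check on sizes: $2^{n(n+1)/2}=2^{(n-1)n/2}\cdot 2^{n}$). Index the $n$ new cells by $k=0,1,\dots,n-1$ according to their distance from the center. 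Because the sliding step is a pure translation, it preserves the orientation of each domino, so the vertical dominoes of $T$ outside the new cells correspond to those of $T'$, while each new cell raises $v$ by $1$ exactly when filled vertically; this yields the $t$ in each factor $1+tq^{2k+1}$ and the $v$-dependence of $a_{n-1}$. What remains is to prove that the rank splits as $r(T)=r(T')+\sum_{k\in V}(2k+1)$, where $V$ indexes the new cells that were filled vertically.

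This rank identity is the main obstacle, and I would handle it through height functions: a tiling corresponds to an integer-valued function $h_T$ on the faces of the grid, the minimal tiling $T_0$ to the pointwise-smallest such function, and an elementary move to a $\pm1$ change of $h$ at a single internal face; hence $r(T)$ equals the normalized volume $\sum_f\bigl(h_T(f)-h_{T_0}(f)\bigr)$, since a legal move is available whenever $h_T\neq h_{T_0}$. The shuffle acts on height functions by an explicit affine map --- a dilation about the center plus prescribed local adjustments at the new cells --- and the work is to check that under this map the volume decomposes additively, the $k$-th new cell contributing $0$ when horizontal and $2k+1$ when vertical, with no cross terms; fixing the correct labelling $k\mapsto 2k+1$ of the $n$ cells is the delicate point (the cases $n=1,2$ make a good sanity check). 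Granting this, the $n$ new cells contribute the independent factors $(1+tq^{2k+1})$ and the core contributes $a_{n-1}(q,t)$, which establishes the recurrence and hence the theorem by iteration. If the shuffle analysis becomes unwieldy, an alternative is to encode tilings of $AD_n$ as families of $n$ non-intersecting Schr\"oder-type lattice paths, with $v(T)$ and $r(T)$ becoming affine functions of the numbers of flat and of diagonal steps, apply the Lindstr\"om--Gessel--Viennot lemma to write $a_n(q,t)$ as an $n\times n$ determinant with $q$-binomial-type entries, and evaluate that determinant by row reduction --- trading the shuffle bookkeeping for a finite but delicate determinant computation.
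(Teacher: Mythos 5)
The paper itself offers no proof of this statement: Theorem \ref{Aztecthm} is quoted from \cite{Elkies}, and the paper's own contribution is the generalization in Theorem \ref{main}, which it proves by the Schr\"oder-path route you mention only as a fallback (combined with weighted subgraph replacements rather than a Lindstr\"om--Gessel--Viennot determinant). So your attempt has to be judged against the known proofs, and as a sketch of the classical domino-shuffling argument it has two genuine gaps beyond the ones you flag. First, the shuffle does not literally decompose a tiling $T$ of the order-$n$ diamond into a tiling $T'$ of the order-$(n-1)$ diamond together with $n$ independently filled $2\times 2$ cells. The inverse shuffle destroys all ``odd blocks'' of $T$ --- a number that varies with $T$ --- slides the survivors, and leaves a variable number of $2\times2$ holes to refill; the clean factor $\prod_{k=0}^{n-1}(1+tq^{2k+1})$ emerges only from a creation/annihilation balance across the whole weighted sum, not cell-by-cell for a fixed pair $(T,T')$. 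Your consistency check $2^{n(n+1)/2}=2^{(n-1)n/2}\cdot 2^n$ is a statement about totals, not about fibers of a map.

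Second, and more seriously, the proposed labelling of the new cells ``by distance from the center'' cannot yield the exponents $1,3,\dots,2n-1$: already for $n=2$ the two blocks created from a tiling of the order-$1$ diamond sit symmetrically about the center, hence are equidistant from it, yet one must contribute $q$ and the other $q^{3}$. The correct bookkeeping in \cite{Elkies} and \cite{propp} instead proves the recurrence in the form $a_n(q,t)=(1+tq)^n\,a_{n-1}(q,tq^2)$: every created block contributes the \emph{same} factor $(1+tq)$, and the substitution $t\mapsto tq^2$ absorbs the fact that the shuffle translates every surviving vertical domino and so changes its level. That recurrence is algebraically equivalent to yours, but the additive rank-splitting $r(T)=r(T')+\sum_{k\in V}(2k+1)$ with $r(T')$ unchanged is not what the shuffle delivers, and it is precisely the step you leave as ``the work to be done.'' As written, the proof is an accurate roadmap to the EKLP argument with its central lemma unproven; your determinantal alternative is viable and is the closer relative of what this paper actually carries out for $\mathcal{AR}_{m,n}(s_1,\dots,s_m)$, where the path sum is evaluated by reduction to column-strict plane partitions instead of by row-reducing a $q$-binomial determinant.
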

The $t=q=1$ specialization of the Aztec Diamond Theorem implies that the number of tilings of the Aztec diamond of order $n$ is equal to $2^{n(n+1)/2}$. Besides the fours original proofs in \cite{Elkies}, a number of further proofs of the Aztec Diamond Theorem and its special cases have been given by several authors (see e.g. \cite{Bosio}, \cite{Brualdi}, \cite{Eu}, \cite{Kamioka}, \cite{Kuo}, \cite{propp}). Moreover, we proved in \cite{Tri2} a generalization of the above unweighted Aztec Diamond Theorem for a family of $4$-vertex regions on the square lattice with diagonals drawn in.

\begin{figure}\centering
\includegraphics[width=14cm]{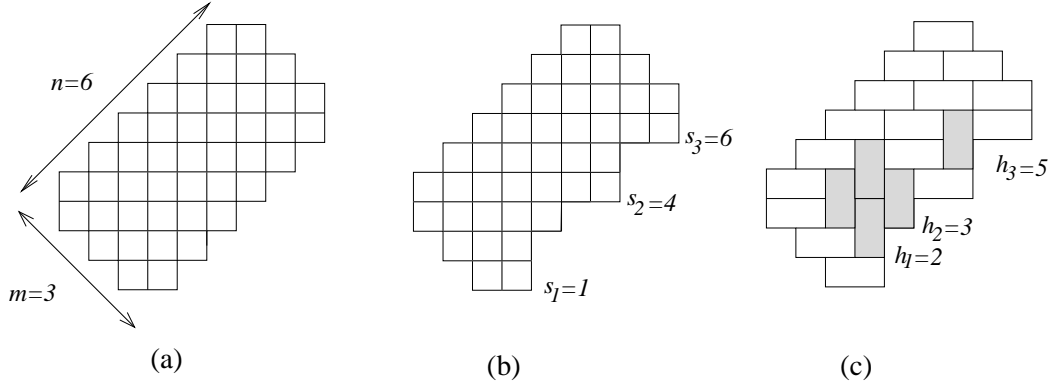}
\caption{(a) The Aztec rectangle $\mathcal{AR}_{3,6}$. (b) The Aztec rectangle with holes $\mathcal{AR}_{3,6}(1,4,6)$. (c) The minimal tiling of $\mathcal{AR}_{3,6}(1,4,6)$. }
\label{Aztecregion}
\end{figure}

The Aztec rectangle is a natural generalization of the Aztec diamond. Figure \ref{Aztecregion}(a) shows an example of the Aztec rectangle.  Denote by $\mathcal{AR}_{m,n}$ the Aztec rectangle having $m$ unit squares on the southwest side and $n$ unit squares along the northwest side.  For $m<n$, $\mathcal{AR}_{m,n}$ does not have any tiling, however when we remove $n-m$ unit squares along the southeast side, the number of tilings of the resulting region is given by a simple product formula (see e.g. \cite{Gessel}, Lemma 3). Denote by $\mathcal{AR}_{m,n}(s_1,s_2,\dotsc,s_{m})$ the $(m\times n)$-Aztec rectangle, where all unit squares on the southeast side, except for the $s_1$-st, the $s_2$-nd, $\dots$ and the $s_m$-th ones, have been removed (see Figure \ref{Aztecregion}(b) for an example). We call the unit squares, which have been removed, \emph{holes}, and our region an \emph{Aztec rectangle with holes}.

In general, an Aztec rectangle with holes does not admit a tiling consisting of all horizontal dominoes.  Assume that $\{h_1,\dotsc,h_{n-m}\}$ ($=\{1,2,\dotsc,n\}-\{s_1,\dotsc,s_{m}\}$) is the position set of the holes. We (re-)define our minimal tiling (still denoted by $T_0$) as follows: Next to the hole at the position $h_i$ on the southeast side, we place southeast-to-northwest strip of $m-(h_i-i)$ vertical dominoes, for $i=1,2,\dotsc,n-m$, and cover the rest of the region by horizontal dominoes. Figure \ref{Aztecregion}(c) illustrates the minimal tiling of the region $\mathcal{AR}_{3,6}(1,4,6)$. We define two statistics $r$ and $v$ for an Aztec rectangle with holes in the same way as the case of the Aztec diamonds.

We consider the following tiling generating function
\begin{equation}
F(q,t):=\sum_{T}q^{r(T)}t^{v(T)},
\end{equation}
where the sum is taken over all tilings $T$ of $\mathcal{AR}_{m,n}(s_1,s_2,\dotsc,s_{m})$. The main result of our paper is the following theorem.

\begin{thm}\label{main}
Assume $m,n,s_1,s_2,\dotsc,s_m$ are positive integers, so that $m<n$ and $1\leq s_1<s_2<\dotsc<s_m\leq n$. Then the tiling generating function of $\mathcal{AR}_{m,n}(s_1,s_2,\dotsc,s_{m})$ is given by
\begin{align}
F(q,t)&=q^{\frac{2(m-1)m(m+1)}{3}+2\sum_{i=1}^{m}(s_i-i)-\sum_{1\leq i\leq j  \leq m}2(s_{i}+j-i-1)}\\
&\quad\quad\quad\times\prod_{k=0}^{m-1}(1+tq^{2k+1})^{m-k}\prod_{1\leq i<j\leq m}\frac{q^{2s_j}-q^{2s_i}}{q^{2j}-q^{2i}}.
\end{align}
\end{thm}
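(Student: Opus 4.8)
The plan is to reduce the weighted tiling count of $\mathcal{AR}_{m,n}(s_1,\dots,s_m)$ to a determinant via a non-intersecting lattice paths argument, and then evaluate that determinant. First I would pass to the planar dual graph of the Aztec rectangle with holes, so that domino tilings become perfect matchings. Using the standard bijection between perfect matchings of the Aztec-diamond-type graphs and families of non-intersecting lattice paths (the routes connecting the $m$ sources coming from the northwest side to the $m$ sinks sitting at the retained positions $s_1<\dots<s_m$ on the southeast side), I would track how the two statistics $r(T)$ and $v(T)$ transform. The key point here is that both statistics are additive over the paths in a controlled way: the number of vertical dominoes counts unit up-steps of the paths, and the rank counts (a shifted version of) the area between the path family and the minimal configuration $T_0$. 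This is exactly the setup in which the Lindström–Gessel–Viennot lemma applies with a $q$-weight on area and a $t$-weight on vertical steps, so $F(q,t)$ becomes $\det\big(M_{ij}\big)_{1\le i,j\le m}$ where each entry $M_{ij}$ is the weighted sum over single paths from source $i$ to sink $j$, and each single-path generating function is a product of the form appearing in the Aztec Diamond Theorem, evaluated at a position determined by $s_j$.

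Second I would compute the single-path generating function explicitly. For a fixed source–sink pair the admissible paths of the relevant length have weight generating function equal (up to an overall monomial prefactor depending on $i$ and $s_j$) to a Gaussian-type product; I expect each entry to take the shape $M_{ij} = c_i \, q^{a(s_j)} \,\binom{\text{something}}{\text{something}}_{q^2}$-free part times $\prod_{k}(1+tq^{2k+1})^{\bullet}$, with the $t$-dependence and the "diagonal" part of the $q$-dependence factoring out of the whole matrix. After pulling those common factors out of each row and each column, the residual matrix has entries that are polynomials in the single variable $x_j := q^{2s_j}$ — in fact, after a further factorization, a Vandermonde-type matrix in the $x_j$. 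Its determinant is then $\prod_{1\le i<j\le m}(x_j - x_i) = \prod_{1\le i<j\le m}(q^{2s_j}-q^{2s_i})$, which is the numerator of the product in the statement; the denominator $\prod_{i<j}(q^{2j}-q^{2i})$ arises from the normalization needed to make the row/column factors match the $s_i = i$ (hole-free, i.e. Aztec-diamond-like) base case, and the $t$-product $\prod_{k=0}^{m-1}(1+tq^{2k+1})^{m-k}$ comes out unchanged, consistent with Theorem~\ref{Aztecthm}.

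Finally I would pin down the overall power of $q$ in the prefactor. This is the bookkeeping step: one must carefully account for (i) the rank of the minimal tiling $T_0$ of the holey region relative to the all-horizontal configuration that would exist if there were no holes, which contributes the term $2\sum_{i=1}^m (s_i - i)$ coming from the strips of vertical dominoes placed next to each hole, (ii) the cubic term $\tfrac{2(m-1)m(m+1)}{3}$ coming from the base (Aztec-diamond) normalization of the determinant, and (iii) the correction $-\sum_{1\le i\le j\le m} 2(s_i + j - i - 1)$ from shifting each single-path generating function so that its lowest-degree term corresponds to the minimal tiling. I would verify the exponent by checking the two boundary cases: $m=1$ (a single path, where everything is an explicit one-line computation) and $s_i = i$ for all $i$ with $n = m$ arbitrary, $n\ge m$, which must reproduce Theorem~\ref{Aztecthm} exactly.

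The main obstacle I anticipate is step three — the exact synchronization of all the monomial prefactors. Getting the paths/matchings bijection to preserve the $q^{r(T)} t^{v(T)}$ weight requires choosing the right affine normalization of "area" for each path (the rank is defined via elementary moves from $T_0$, not intrinsically as an area, so one must prove that $r(T)$ equals the area shift, a lemma in its own right), and then the three separate sources of $q$-powers above have to combine into precisely the stated closed form. By contrast, the determinant evaluation itself is a routine Vandermonde computation once the entries are in the right form, and the LGV application is standard for these Aztec-rectangle graphs (the sources and sinks are automatically in the non-crossing cyclic order that makes the sign trivial).
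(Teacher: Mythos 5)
Your first and third steps line up well with what the paper actually does: the bijection with non-intersecting Schr\"oder paths, the identity $v(T)+\level(\mathbf{P})=m(m+1)/2$, and the identification of $r(T)$ with the area/weight shift relative to $T_0$ (contributing $\sum_{1\leq i\leq j\leq m}2(s_i+j-i-1)$) are precisely the content of Lemma~\ref{relate}, and you are right that the rank-equals-area-shift claim needs its own proof. The genuine gap is in your second step. The LGV entries $M_{ij}$ in this setting are generating functions of \emph{single} constrained Schr\"oder-type paths weighted by $q^{\area}$ and $t^{\level}$; since the number of level steps is not determined by the endpoints (one may trade an up--down pair for a level step without moving either endpoint), each entry is a genuine polynomial in $t$, not a monomial times a $t$-free factor, and these $q,t$-ballot/Schr\"oder polynomials do not split into factors of the form $1+tq^{2k+1}$. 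Consequently the product $\prod_{k=0}^{m-1}(1+tq^{2k+1})^{m-k}$ cannot be pulled out of the matrix entry by entry, and the residual matrix is not a Vandermonde in $q^{2s_j}$. What you would actually be facing is a minor, indexed by the $s_j$, of a Hankel-type matrix of $q,t$-Schr\"oder polynomials; evaluating such determinants is the hard content of the theorem (already for the Aztec diamond it is Kamioka's biorthogonal-polynomial computation, not a Vandermonde), so the step you dismiss as routine is exactly where the proof is missing.

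For comparison, the paper avoids the determinant altogether: it uses the Schr\"oder paths only for the statistic-matching lemma, and computes the weighted matching count by graph surgery. Repeated application of the Spider (urban renewal), Vertex-Splitting and Star Lemmas (Lemma~\ref{T1}, iterated $m$ times) transforms the weighted Aztec rectangle graph into the dual graph of a dented semihexagon $\mathcal{SH}_{m,n-m}(s_1,\dotsc,s_m)$, whose lozenge tilings biject with column-strict plane partitions of shape $(s_m-m,\dotsc,s_1-1)$; the classical generating function for those supplies the factor $\prod_{i<j}\bigl(q^{2s_j}-q^{2s_i}\bigr)/\bigl(q^{2j}-q^{2i}\bigr)$, while the accumulated surgery constants $\prod_{k=1}^{m}\Delta_k^{m-k+1}$, specialized at $a=b=1$, $c=t^{-1}$, $d=q$ and $q\mapsto q^2$, produce $\prod_{k=0}^{m-1}(1+tq^{2k+1})^{m-k}$. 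If you want to keep a determinantal route, you would need either to prove the required minor evaluation for $q,t$-Schr\"oder polynomials directly, or to move the LGV application over to the semihexagon side, where the entries really are Gaussian binomials and the bialternant-type evaluation you envision does go through.
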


\section{Subgraph replacements}

The \emph{dual graph} of a region $R$ is the graph whose vertices are the fundamental regions of $R$ and whose edges connect precisely two fundamental regions sharing an edge. A \emph{perfect matching} of a graph $G$ is a collection of disjoint edges covering all vertices of $G$.  The tilings of a region can be identified naturally with the perfect matchings of its dual graph.

Let $G$ be a weighted graph. The  \emph{matching generating function} $\M(G)$ of $G$  is defined to be the sum of weights of all perfect matchings of $G$, where the \emph{weight} of a perfect matching is the product of weights of its constituent edges.  If the tiles of a region $R$ carry some weights, we define similarly the \emph{tiling generating function} $\M(R)$ of the region $R$.  Moreover, each edge of the dual graph $G$ of the region $R$ carries the same weight as its corresponding tile in $R$.

\medskip

\begin{figure}\centering
\begin{picture}(0,0)%
\includegraphics{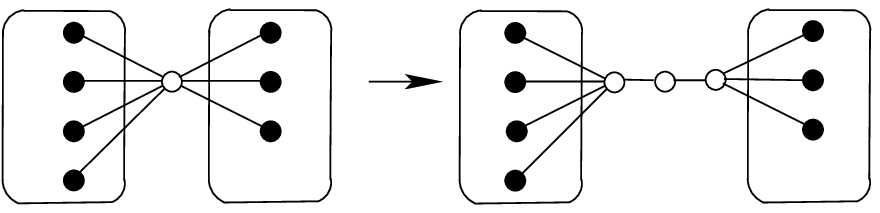}%
\end{picture}%
\setlength{\unitlength}{3947sp}%
\begingroup\makeatletter\ifx\SetFigFont\undefined%
\gdef\SetFigFont#1#2#3#4#5{%
  \reset@font\fontsize{#1}{#2pt}%
  \fontfamily{#3}\fontseries{#4}\fontshape{#5}%
  \selectfont}%
\fi\endgroup%
\begin{picture}(4188,1361)(593,-556)
\put(1336,591){\makebox(0,0)[lb]{\smash{{\SetFigFont{10}{12.0}{\familydefault}{\mddefault}{\updefault}{$v$}%
}}}}
\put(3549,621){\makebox(0,0)[lb]{\smash{{\SetFigFont{10}{12.0}{\familydefault}{\mddefault}{\updefault}{$v'$}%
}}}}
\put(3757, 89){\makebox(0,0)[lb]{\smash{{\SetFigFont{10}{12.0}{\familydefault}{\mddefault}{\updefault}{$x$}%
}}}}
\put(3999,621){\makebox(0,0)[lb]{\smash{{\SetFigFont{10}{12.0}{\familydefault}{\mddefault}{\updefault}{$v''$}%
}}}}
\put(820,-541){\makebox(0,0)[lb]{\smash{{\SetFigFont{10}{12.0}{\familydefault}{\mddefault}{\updefault}{$H$}%
}}}}
\put(1840,-535){\makebox(0,0)[lb]{\smash{{\SetFigFont{10}{12.0}{\familydefault}{\mddefault}{\updefault}{$K$}%
}}}}
\put(3031,-535){\makebox(0,0)[lb]{\smash{{\SetFigFont{10}{12.0}{\familydefault}{\mddefault}{\updefault}{$H$}%
}}}}
\put(4426,-484){\makebox(0,0)[lb]{\smash{{\SetFigFont{10}{12.0}{\familydefault}{\mddefault}{\updefault}{$K$}%
}}}}
\end{picture}%
\caption{Vertex splitting.}
\label{vertexsplitting}
\end{figure}

Next, we present several preliminary results of the subgraph replacement method.

\begin{lem} [Vertex-Splitting Lemma]\label{VS}
 Let $G$ be a weighted graph and $v$ a vertex of $G$. Denote by $N(v)$ the set vertices adjacent to $v$.
  For any disjoint union $N(v)=H\cup K$, let $G'$ be the graph obtained from $G\setminus v$ by including three new vertices $v'$, $v''$ and $x$ so that $N(v')=H\cup \{x\}$, $N(v'')=K\cup\{x\}$, and $N(x)=\{v',v''\}$ (see Figure \ref{vertexsplitting}). Then $\M(G)=\M(G')$.
\end{lem}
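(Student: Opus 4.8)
The plan is to establish the Vertex-Splitting Lemma by exhibiting an explicit weight-preserving bijection between the perfect matchings of $G$ and those of $G'$. First I would normalize the new edges: since the new vertices $v'$, $v''$, $x$ and the edges among them were introduced without prescribed weights, I would assign the edge $v'x$ weight $1$, the edge $xv''$ weight $1$, and for each $u\in H$ give the edge $v'u$ the same weight that $vu$ had in $G$, and similarly for each $w\in K$ give $v''w$ the weight of $vw$. With this convention the claimed identity $\M(G)=\M(G')$ becomes a statement about sums of edge-weight products, and it suffices to match matchings term by term.

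Next I would describe the bijection. In any perfect matching $M'$ of $G'$, the vertex $x$ must be covered, so exactly one of the edges $v'x$, $xv''$ lies in $M'$. Say $v'x\in M'$; then $v''$ must be matched to some vertex $w\in K$ via an edge $v''w$, and $v'$ is already saturated, so every vertex of $H$ is matched within $G\setminus v$. I would then send $M'$ to the matching $M$ of $G$ obtained by deleting $v'$, $v''$, $x$ and the edges $v'x$, $v''w$, and inserting the edge $vw$; all other edges of $M'$, which lie entirely in $G\setminus v$, are retained. The symmetric case $xv''\in M'$ is handled identically with the roles of $H$ and $K$ swapped, producing an edge $vu$ with $u\in H$. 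Conversely, given a perfect matching $M$ of $G$, the vertex $v$ is matched to a unique neighbor, which lies in exactly one of $H$ or $K$; this determines which of the two cases above to invert, and the reverse construction is immediate. One checks that these two maps are mutually inverse.

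Finally I would verify that the bijection preserves weights: passing from $M'$ to $M$ we remove edges $v'x$ (weight $1$) and $v''w$ (weight $=\w(vw)$) and add the edge $vw$, so the product of edge weights is unchanged; the remaining edges are literally the same. Hence each matching of $G$ and its image contribute equal terms to $\M(G)$ and $\M(G')$, and summing gives $\M(G)=\M(G')$.

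The only real subtlety — more a bookkeeping point than a genuine obstacle — is the treatment of the unspecified weights on the three new edges: the lemma as stated is only true once one adopts the natural weighting described above (splitting the weight of $vu$ across the new edge so that the ``pendant'' edges at $x$ carry weight $1$), and I would make this convention explicit at the outset. Everything else is a routine case check on where $x$ is matched.
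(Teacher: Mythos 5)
Your proof is correct. Note that the paper itself states the Vertex-Splitting Lemma without proof, treating it as a known preliminary of the subgraph replacement method, so there is no in-paper argument to compare against; your bijective argument --- case analysis on which of the two edges at $x$ is used, followed by contracting $v'$, $v''$, $x$ back to $v$ --- is the standard proof of this fact, and your explicit attention to the weight convention on the new edges (weight $1$ on $v'x$ and $xv''$, original weights retained on the edges to $H$ and $K$) is exactly the convention implicit in Figure \ref{vertexsplitting}. Nothing is missing.
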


\begin{lem}[Star Lemma]\label{star}
Let $G$ be a weighted graph, and let $v$ be a vertex of~$G$. Let $G'$ be the graph obtained from $G$ by multiplying the weights of all edges incident to $v$ by $t>0$. Then $\M(G')=t\M(G)$.
\end{lem}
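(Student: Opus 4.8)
The plan is to exploit the fact that $G$ and $G'$ share the same underlying (unweighted) graph: they differ only in the weights assigned to the edges incident to $v$. Consequently they have exactly the same set of perfect matchings, and the identity map furnishes a weight-tracking bijection between the matchings of $G$ and those of $G'$. The entire argument thus reduces to understanding how the weight of a single fixed matching changes under the passage from $G$ to $G'$.

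The key observation is that, by the very definition of a perfect matching, every vertex of $G$---in particular $v$---is covered by exactly one edge of the matching. Hence for each perfect matching $M$, precisely one edge incident to $v$ lies in $M$. When we form $G'$ by scaling the weights of all edges at $v$ by $t$, this unique incident edge has its weight multiplied by $t$, while every other edge of $M$ retains its original weight. Since the weight of $M$ is the product of the weights of its constituent edges, I would conclude $\wt_{G'}(M) = t\,\wt_{G}(M)$ for every perfect matching $M$.

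Summing this identity over all perfect matchings then gives $\M(G') = \sum_{M}\wt_{G'}(M) = t\sum_{M}\wt_{G}(M) = t\,\M(G)$, as claimed; note this remains valid even when $G$ admits no perfect matching, where both sides vanish. There is essentially no obstacle here. The only point demanding care is the elementary observation that a perfect matching uses exactly one edge at each vertex, which is precisely what forces a single, matching-independent scaling factor of $t$ rather than a power of $t$ varying from matching to matching.
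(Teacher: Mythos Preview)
Your argument is correct and complete: the identity bijection on perfect matchings, together with the observation that every perfect matching contains exactly one edge incident to $v$, yields $\wt_{G'}(M)=t\,\wt_{G}(M)$ for each $M$, and summing gives the result. Note that the paper itself states the Star Lemma as a preliminary fact without proof, so there is no alternative argument to compare against; your proof is the standard one.
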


The following result is a generalization (due to Propp) of the ``urban renewal" trick first observed by Kuperberg.
\begin{figure}\centering
\begin{picture}(0,0)%
\includegraphics{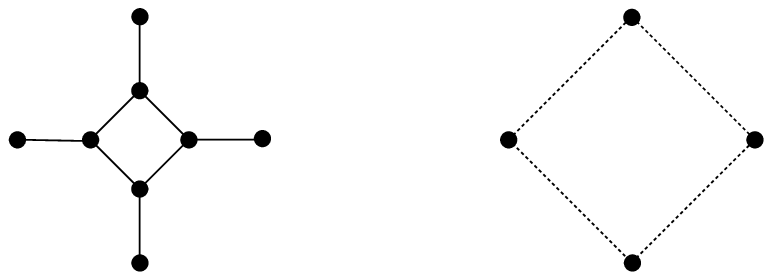}%
\end{picture}%
\setlength{\unitlength}{3947sp}%
\begingroup\makeatletter\ifx\SetFigFont\undefined%
\gdef\SetFigFont#1#2#3#4#5{%
  \reset@font\fontsize{#1}{#2pt}%
  \fontfamily{#3}\fontseries{#4}\fontshape{#5}%
  \selectfont}%
\fi\endgroup%
\begin{picture}(4054,1735)(340,-948)
\put(355,-156){\makebox(0,0)[lb]{\smash{{\SetFigFont{10}{12.0}{\familydefault}{\mddefault}{\updefault}{$A$}%
}}}}
\put(1064,-933){\makebox(0,0)[lb]{\smash{{\SetFigFont{10}{12.0}{\familydefault}{\mddefault}{\updefault}{$B$}%
}}}}
\put(1891,-106){\makebox(0,0)[lb]{\smash{{\SetFigFont{10}{12.0}{\familydefault}{\mddefault}{\updefault}{$C$}%
}}}}
\put(1182,603){\makebox(0,0)[lb]{\smash{{\SetFigFont{10}{12.0}{\familydefault}{\mddefault}{\updefault}{$D$}%
}}}}
\put(2717,-189){\makebox(0,0)[lb]{\smash{{\SetFigFont{10}{12.0}{\familydefault}{\mddefault}{\updefault}{$A$}%
}}}}
\put(3426,-933){\makebox(0,0)[lb]{\smash{{\SetFigFont{10}{12.0}{\familydefault}{\mddefault}{\updefault}{$B$}%
}}}}
\put(4253,-106){\makebox(0,0)[lb]{\smash{{\SetFigFont{10}{12.0}{\familydefault}{\mddefault}{\updefault}{$C$}%
}}}}
\put(3426,603){\makebox(0,0)[lb]{\smash{{\SetFigFont{10}{12.0}{\familydefault}{\mddefault}{\updefault}{$D$}%
}}}}
\put(904,-382){\makebox(0,0)[lb]{\smash{{\SetFigFont{10}{12.0}{\familydefault}{\mddefault}{\updefault}{$x$}%
}}}}
\put(1396,-388){\makebox(0,0)[lb]{\smash{{\SetFigFont{10}{12.0}{\familydefault}{\mddefault}{\updefault}{$y$}%
}}}}
\put(1418,130){\makebox(0,0)[lb]{\smash{{\SetFigFont{10}{12.0}{\familydefault}{\mddefault}{\updefault}{$z$}%
}}}}
\put(946,130){\makebox(0,0)[lb]{\smash{{\SetFigFont{10}{12.0}{\familydefault}{\mddefault}{\updefault}{$t$}%
}}}}
\put(2968,284){\makebox(0,0)[lb]{\smash{{\SetFigFont{10}{12.0}{\familydefault}{\mddefault}{\updefault}{$y/\Delta$}%
}}}}
\put(3934,311){\makebox(0,0)[lb]{\smash{{\SetFigFont{10}{12.0}{\familydefault}{\mddefault}{\updefault}{$x/\Delta$}%
}}}}
\put(3964,-544){\makebox(0,0)[lb]{\smash{{\SetFigFont{10}{12.0}{\familydefault}{\mddefault}{\updefault}{$t/\Delta$}%
}}}}
\put(2965,-526){\makebox(0,0)[lb]{\smash{{\SetFigFont{10}{12.0}{\familydefault}{\mddefault}{\updefault}{$z/\Delta$}%
}}}}
\put(2197,-817){\makebox(0,0)[lb]{\smash{{\SetFigFont{10}{12.0}{\familydefault}{\mddefault}{\updefault}{$\Delta= xz+yt$}%
}}}}
\end{picture}%
\caption{Urban renewal.}
\label{spider1}
\end{figure}

\begin{lem} [Spider Lemma]\label{spider}
 Let $G$ be a weighted graph containing the subgraph $K$ shown on the left in Figure \ref{spider1} (the labels indicate weights, unlabeled edges have weight 1). Suppose in addition that the four inner black vertices in the subgraph $K$, different from $A,B,C,D$, have no neighbors outside $K$. Let $G'$ be the graph obtained from $G$ by replacing $K$ by the graph $\overline{K}$ shown on right in Figure \ref{spider1}, where the dashed lines indicate new edges, weighted as shown. Then $\M(G)=(xz+yt)\M(G')$.
\end{lem}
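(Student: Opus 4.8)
The plan is to treat the Spider Lemma as a purely local statement. The key hypothesis is that the four inner black vertices of $K$ have no neighbors outside $K$; consequently every perfect matching of $G$ decomposes into a matching of $K$ that covers all four inner vertices together with some subset $S\subseteq\{A,B,C,D\}$ of the interface vertices, plus a matching of the part of $G$ outside $K$ that covers exactly the complementary set $\{A,B,C,D\}\setminus S$. The same decomposition applies to $G'$, and the "outside" part is literally the same weighted graph for $G$ and for $G'$. So first I would record, for each $S\subseteq\{A,B,C,D\}$, the quantity $N_S$ defined as the sum of the weights of the matchings of the common outside graph $G\setminus(\text{interior of }K)$ that cover every vertex except exactly those in $S$; this $N_S$ is the same whether we started from $G$ or from $G'$.

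Next, let $\M_S(K)$ be the sum of the weights of the matchings of $K$ that cover all four inner vertices and exactly the interface vertices in $S$, and define $\M_S(\overline{K})$ analogously. Grouping perfect matchings according to this interface behaviour yields
\begin{gather*}
\M(G)=\sum_{S\subseteq\{A,B,C,D\}}\M_S(K)\,N_{\{A,B,C,D\}\setminus S},\\
\M(G')=\sum_{S\subseteq\{A,B,C,D\}}\M_S(\overline{K})\,N_{\{A,B,C,D\}\setminus S}.
\end{gather*}
Hence it suffices to establish the local identity $\M_S(K)=(xz+yt)\,\M_S(\overline{K})$ for every $S\subseteq\{A,B,C,D\}$.

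This is now a finite check. A parity count (each of $K$ and $\overline{K}$ has four interior vertices) shows that both sides vanish unless $|S|$ is even, leaving only the eight subsets with $|S|\in\{0,2,4\}$; and the rotational and reflective symmetry of the configuration in Figure~\ref{spider1}, which cyclically permutes $A,B,C,D$ while correspondingly relabelling $x,y,z,t$, reduces this to essentially the cases $S=\emptyset$, $S$ a pair of opposite interface vertices, $S$ a pair of adjacent ones, and $S=\{A,B,C,D\}$. Each of these is settled by directly enumerating the handful of matchings on each side and reading off the product of edge weights. The case $S=\emptyset$ is the one that produces the scalar factor: there the matching of $K$ is forced to be a perfect matching of the inner $4$-cycle, whose matching generating function is exactly $xz+yt=\Delta$, while on the $\overline{K}$ side the corresponding matching is forced and contributes the weight $1$.

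I expect the only real work to be bookkeeping: one must be careful about which interface edges are counted as belonging to $K$ (resp. $\overline{K}$) versus to the outside, so that the step asserting the $N_S$ are common is genuinely valid, and one must track the $\Delta^{-1}$-weighted edges of $\overline{K}$ correctly in the remaining $|S|=2$ and $|S|=4$ cases. There is no analytic content; everything comes down to the identity $\M(\text{weighted }4\text{-cycle})=xz+yt$ together with a short table of cases, so the main obstacle is purely organizational.
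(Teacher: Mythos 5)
Your argument is correct, but note that the paper itself offers no proof of this lemma: it is quoted as a known result (Kuperberg's ``urban renewal'' as generalized by Propp), so there is no internal proof to compare against. What you wrote is precisely the standard proof from the literature: since the four inner vertices of $K$ have no neighbors outside $K$, every perfect matching of $G$ restricts to a matching of $K$ covering the inner vertices and some $S\subseteq\{A,B,C,D\}$, the outside contributions $N_{\{A,B,C,D\}\setminus S}$ are common to $G$ and $G'$, and the lemma reduces to the local identity $\M_S(K)=(xz+yt)\M_S(\overline{K})$, verified case by case ($S=\emptyset$ gives $xz+yt$ versus $1$; adjacent pairs give $z,t,x,y$ versus $z/\Delta,t/\Delta,x/\Delta,y/\Delta$; opposite pairs give $0=0$; $S=\{A,B,C,D\}$ gives $1$ versus $1/\Delta$). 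One small slip: $\overline{K}$ in Figure \ref{spider1} is the $4$-cycle directly on $A,B,C,D$ and has no interior vertices, so your parenthetical ``each of $K$ and $\overline{K}$ has four interior vertices'' is inaccurate, though the parity conclusion ($|S|$ even) is unaffected. Otherwise the proposal is complete and sound.
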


A \textit{forced edge} of a graph $G$ is an edge contained in every perfect matching of $G$. Assume that $G$ is a weighted graph with weight assignment $\wt$ on its edges, and $G'$ is obtained from $G$ by removing forced edges $e_1,\dotsc,e_k$, and removing the vertices incident to those edges. Then one clearly has
\begin{equation*}
\M(G)=\M(G')\prod_{i=1}^k\wt(e_i).
\end{equation*}
\textit{Hereafter, whenever we remove some forced edges, we remove also the vertices incident to them.}

\medskip

Denote by $AR_{m,n}$ the dual graph of the Aztec rectangle $\mathcal{AR}_{m,n}$ rotated $45^0$ clockwise. The graph $AR_{m,n}$ consists of $m$ rows and $n$ columns of $4$-cycles (see shaded diamonds in Figure \ref{Aztecgraph}(a)). We call this graph an \emph{Aztec rectangle graph}. If one removes all bottommost vertices of $AR_{m,n}$, the resulting graph is denoted by $AR_{m-\frac12,n}$, and called a \textit{baseless Aztec rectangle graph} (see Figure \ref{Aztecgraph}(b) for an example).

\begin{figure}\center
\includegraphics[width=10cm]{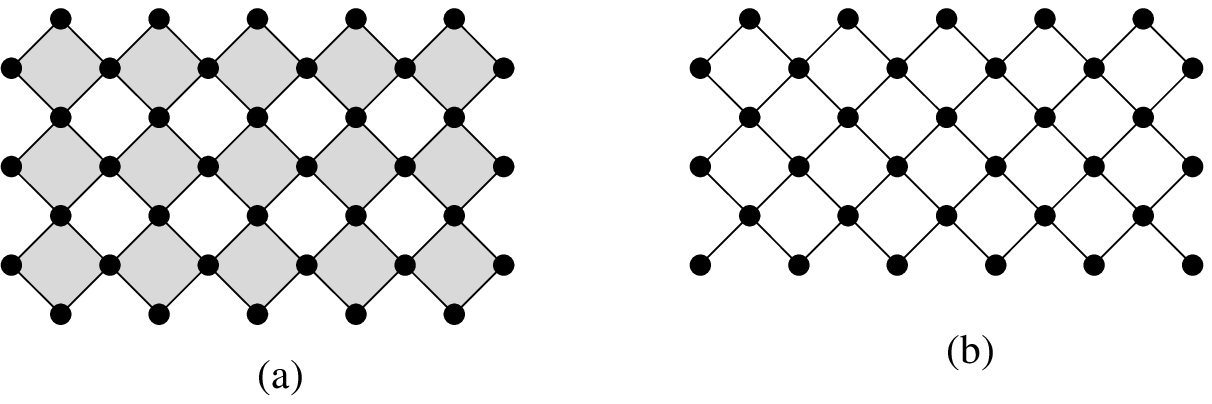}
\caption{(a) The Aztec rectangle graph $AR_{3,5}$ with diamond-faces shaded. (b) The baseless Aztec rectangle graph $AR_{3-\frac{1}{2},5}$.}
\label{Aztecgraph}
\end{figure}

Assume $a,b,c,d,q$ are positive numbers.  We consider the weight assignment $\wt_{c,d}^{a,b}(q)$ on the edges of $AR_{m,n}$ as follows. The diamond-face on row $i$ (from bottom to top) and column $j$ (from left to right) have edge-weights $a,b,dq^{i+j-2},cq^{i+j-2}$ (in clockwise order, starting from the northwest edge). See the left picture in Figure \ref{T1b} for the case $m=3$ and $n=4$. Denote by $AR_{m,n}\left(\wt_{c,d}^{a,b}(q)\right)$ the resulting weighted Aztec rectangle graph; and, similar to the unweighted case, denote by $AR_{m-\frac12,n}\left(\wt_{c,d}^{a,b}(q)\right)$ the weighted baseless Aztec rectangle graph obtained from $AR_{m,n}\left(\wt_{c,d}^{a,b}(q)\right)$ by removing the bottommost vertices.

The \textit{connected sum} $G\#G'$ of two disjoint graphs $G$ and $G'$ along the ordered sets of vertices $\{v_1,\dotsc,v_n\}\subset V(G)$ and $\{v'_1,\dotsc,v'_n\}\subset V(G')$ is the graph obtained from $G$ and $G'$ by identifying vertices $v_i$ and $v'_i$, for $i=1,\dotsc,n$.

\medskip

\begin{figure}\centering
\resizebox{!}{5cm}{
\begin{picture}(0,0)%
\includegraphics{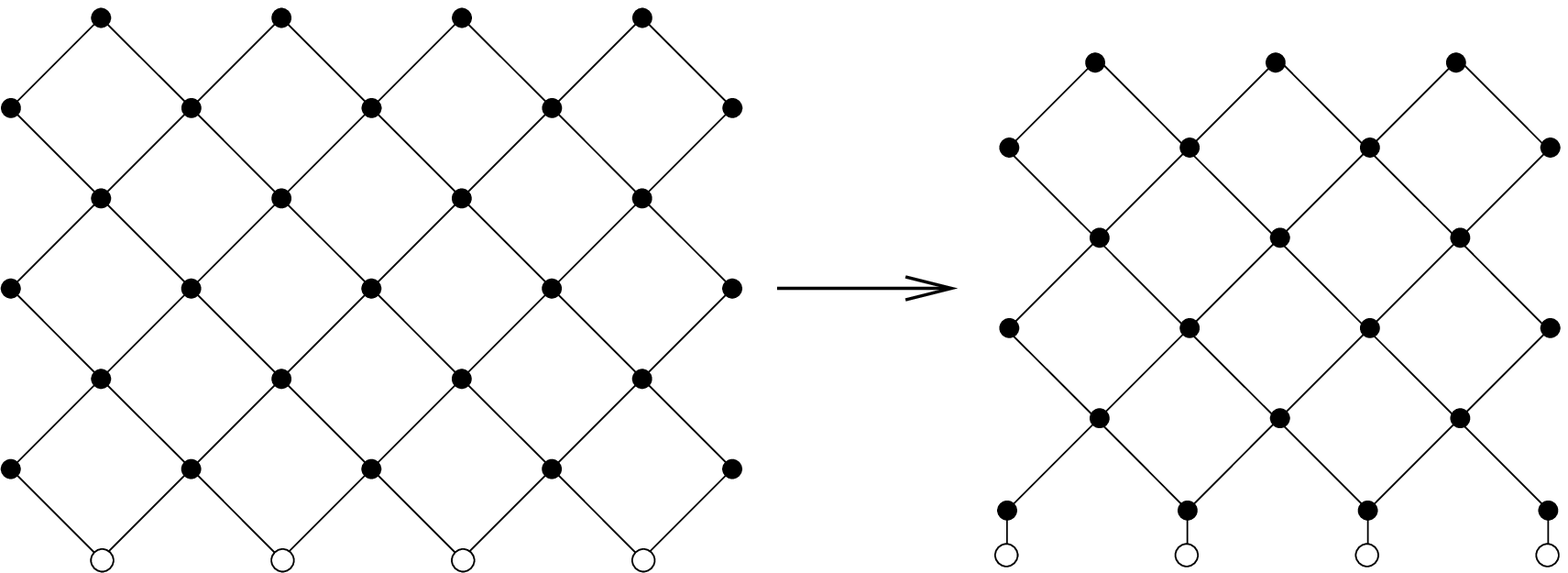}%
\end{picture}%
\setlength{\unitlength}{3947sp}%
\begingroup\makeatletter\ifx\SetFigFont\undefined%
\gdef\SetFigFont#1#2#3#4#5{%
  \reset@font\fontsize{#1}{#2pt}%
  \fontfamily{#3}\fontseries{#4}\fontshape{#5}%
  \selectfont}%
\fi\endgroup%
\begin{picture}(8177,2970)(648,-3242)
\put(888,-3172){\makebox(0,0)[lb]{\smash{{\SetFigFont{10}{12.0}{\rmdefault}{\mddefault}{\updefault}{$c$}%
}}}}
\put(1360,-3172){\makebox(0,0)[lb]{\smash{{\SetFigFont{10}{12.0}{\rmdefault}{\mddefault}{\updefault}{$d$}%
}}}}
\put(753,-2505){\makebox(0,0)[lb]{\smash{{\SetFigFont{10}{12.0}{\rmdefault}{\mddefault}{\updefault}{$a$}%
}}}}
\put(1495,-2490){\makebox(0,0)[lb]{\smash{{\SetFigFont{10}{12.0}{\rmdefault}{\mddefault}{\updefault}{$b$}%
}}}}
\put(1713,-2490){\makebox(0,0)[lb]{\smash{{\SetFigFont{10}{12.0}{\rmdefault}{\mddefault}{\updefault}{$a$}%
}}}}
\put(2433,-2490){\makebox(0,0)[lb]{\smash{{\SetFigFont{10}{12.0}{\rmdefault}{\mddefault}{\updefault}{$b$}%
}}}}
\put(2695,-2490){\makebox(0,0)[lb]{\smash{{\SetFigFont{10}{12.0}{\rmdefault}{\mddefault}{\updefault}{$a$}%
}}}}
\put(3370,-2490){\makebox(0,0)[lb]{\smash{{\SetFigFont{10}{12.0}{\rmdefault}{\mddefault}{\updefault}{$b$}%
}}}}
\put(3625,-2482){\makebox(0,0)[lb]{\smash{{\SetFigFont{10}{12.0}{\rmdefault}{\mddefault}{\updefault}{$a$}%
}}}}
\put(4345,-2505){\makebox(0,0)[lb]{\smash{{\SetFigFont{10}{12.0}{\rmdefault}{\mddefault}{\updefault}{$b$}%
}}}}
\put(1825,-3172){\makebox(0,0)[lb]{\smash{{\SetFigFont{10}{12.0}{\rmdefault}{\mddefault}{\updefault}{$cq$}%
}}}}
\put(2268,-3157){\makebox(0,0)[lb]{\smash{{\SetFigFont{10}{12.0}{\rmdefault}{\mddefault}{\updefault}{$dq$}%
}}}}
\put(2680,-3165){\makebox(0,0)[lb]{\smash{{\SetFigFont{10}{12.0}{\rmdefault}{\mddefault}{\updefault}{$cq^2$}%
}}}}
\put(3243,-3142){\makebox(0,0)[lb]{\smash{{\SetFigFont{10}{12.0}{\rmdefault}{\mddefault}{\updefault}{$dq^2$}%
}}}}
\put(3618,-3150){\makebox(0,0)[lb]{\smash{{\SetFigFont{10}{12.0}{\rmdefault}{\mddefault}{\updefault}{$cq^3$}%
}}}}
\put(4225,-3112){\makebox(0,0)[lb]{\smash{{\SetFigFont{10}{12.0}{\rmdefault}{\mddefault}{\updefault}{$dq^3$}%
}}}}
\put(798,-1530){\makebox(0,0)[lb]{\smash{{\SetFigFont{10}{12.0}{\rmdefault}{\mddefault}{\updefault}{$a$}%
}}}}
\put(1465,-1530){\makebox(0,0)[lb]{\smash{{\SetFigFont{10}{12.0}{\rmdefault}{\mddefault}{\updefault}{$b$}%
}}}}
\put(1758,-1515){\makebox(0,0)[lb]{\smash{{\SetFigFont{10}{12.0}{\rmdefault}{\mddefault}{\updefault}{$a$}%
}}}}
\put(2403,-1545){\makebox(0,0)[lb]{\smash{{\SetFigFont{10}{12.0}{\rmdefault}{\mddefault}{\updefault}{$b$}%
}}}}
\put(2665,-1537){\makebox(0,0)[lb]{\smash{{\SetFigFont{10}{12.0}{\rmdefault}{\mddefault}{\updefault}{$a$}%
}}}}
\put(3355,-1537){\makebox(0,0)[lb]{\smash{{\SetFigFont{10}{12.0}{\rmdefault}{\mddefault}{\updefault}{$b$}%
}}}}
\put(3603,-1545){\makebox(0,0)[lb]{\smash{{\SetFigFont{10}{12.0}{\rmdefault}{\mddefault}{\updefault}{$a$}%
}}}}
\put(4323,-1560){\makebox(0,0)[lb]{\smash{{\SetFigFont{10}{12.0}{\rmdefault}{\mddefault}{\updefault}{$b$}%
}}}}
\put(850,-442){\makebox(0,0)[lb]{\smash{{\SetFigFont{10}{12.0}{\rmdefault}{\mddefault}{\updefault}{$a$}%
}}}}
\put(1398,-457){\makebox(0,0)[lb]{\smash{{\SetFigFont{10}{12.0}{\rmdefault}{\mddefault}{\updefault}{$b$}%
}}}}
\put(1825,-442){\makebox(0,0)[lb]{\smash{{\SetFigFont{10}{12.0}{\rmdefault}{\mddefault}{\updefault}{$a$}%
}}}}
\put(2313,-480){\makebox(0,0)[lb]{\smash{{\SetFigFont{10}{12.0}{\rmdefault}{\mddefault}{\updefault}{$b$}%
}}}}
\put(2770,-442){\makebox(0,0)[lb]{\smash{{\SetFigFont{10}{12.0}{\rmdefault}{\mddefault}{\updefault}{$a$}%
}}}}
\put(3288,-480){\makebox(0,0)[lb]{\smash{{\SetFigFont{10}{12.0}{\rmdefault}{\mddefault}{\updefault}{$b$}%
}}}}
\put(3693,-457){\makebox(0,0)[lb]{\smash{{\SetFigFont{10}{12.0}{\rmdefault}{\mddefault}{\updefault}{$a$}%
}}}}
\put(4255,-502){\makebox(0,0)[lb]{\smash{{\SetFigFont{10}{12.0}{\rmdefault}{\mddefault}{\updefault}{$b$}%
}}}}
\put(820,-2160){\makebox(0,0)[lb]{\smash{{\SetFigFont{10}{12.0}{\rmdefault}{\mddefault}{\updefault}{$cq$}%
}}}}
\put(1315,-2190){\makebox(0,0)[lb]{\smash{{\SetFigFont{10}{12.0}{\rmdefault}{\mddefault}{\updefault}{$dq$}%
}}}}
\put(1720,-2190){\makebox(0,0)[lb]{\smash{{\SetFigFont{10}{12.0}{\rmdefault}{\mddefault}{\updefault}{$cq^2$}%
}}}}
\put(2268,-2175){\makebox(0,0)[lb]{\smash{{\SetFigFont{10}{12.0}{\rmdefault}{\mddefault}{\updefault}{$dq^2$}%
}}}}
\put(2650,-2175){\makebox(0,0)[lb]{\smash{{\SetFigFont{10}{12.0}{\rmdefault}{\mddefault}{\updefault}{$cq^3$}%
}}}}
\put(3220,-2182){\makebox(0,0)[lb]{\smash{{\SetFigFont{10}{12.0}{\rmdefault}{\mddefault}{\updefault}{$dq^3$}%
}}}}
\put(678,-1207){\makebox(0,0)[lb]{\smash{{\SetFigFont{10}{12.0}{\rmdefault}{\mddefault}{\updefault}{$cq^2$}%
}}}}
\put(1368,-1230){\makebox(0,0)[lb]{\smash{{\SetFigFont{10}{12.0}{\rmdefault}{\mddefault}{\updefault}{$dq^2$}%
}}}}
\put(1720,-1222){\makebox(0,0)[lb]{\smash{{\SetFigFont{10}{12.0}{\rmdefault}{\mddefault}{\updefault}{$cq^3$}%
}}}}
\put(2305,-1222){\makebox(0,0)[lb]{\smash{{\SetFigFont{10}{12.0}{\rmdefault}{\mddefault}{\updefault}{$dq^3$}%
}}}}
\put(2658,-1237){\makebox(0,0)[lb]{\smash{{\SetFigFont{10}{12.0}{\rmdefault}{\mddefault}{\updefault}{$cq^4$}%
}}}}
\put(3235,-1237){\makebox(0,0)[lb]{\smash{{\SetFigFont{10}{12.0}{\rmdefault}{\mddefault}{\updefault}{$dq^4$}%
}}}}
\put(3610,-1252){\makebox(0,0)[lb]{\smash{{\SetFigFont{10}{12.0}{\rmdefault}{\mddefault}{\updefault}{$cq^5$}%
}}}}
\put(4210,-1245){\makebox(0,0)[lb]{\smash{{\SetFigFont{10}{12.0}{\rmdefault}{\mddefault}{\updefault}{$dq^5$}%
}}}}
\put(3595,-2190){\makebox(0,0)[lb]{\smash{{\SetFigFont{10}{12.0}{\rmdefault}{\mddefault}{\updefault}{$cq^4$}%
}}}}
\put(4180,-2182){\makebox(0,0)[lb]{\smash{{\SetFigFont{10}{12.0}{\rmdefault}{\mddefault}{\updefault}{$dq^4$}%
}}}}
\put(5871,-2726){\makebox(0,0)[lb]{\smash{{\SetFigFont{10}{12.0}{\rmdefault}{\mddefault}{\updefault}{$aq$}%
}}}}
\put(6711,-2719){\makebox(0,0)[lb]{\smash{{\SetFigFont{10}{12.0}{\rmdefault}{\mddefault}{\updefault}{$b$}%
}}}}
\put(6898,-2726){\makebox(0,0)[lb]{\smash{{\SetFigFont{10}{12.0}{\rmdefault}{\mddefault}{\updefault}{$aq$}%
}}}}
\put(7641,-2726){\makebox(0,0)[lb]{\smash{{\SetFigFont{10}{12.0}{\rmdefault}{\mddefault}{\updefault}{$b$}%
}}}}
\put(7836,-2719){\makebox(0,0)[lb]{\smash{{\SetFigFont{10}{12.0}{\rmdefault}{\mddefault}{\updefault}{$aq$}%
}}}}
\put(8608,-2711){\makebox(0,0)[lb]{\smash{{\SetFigFont{10}{12.0}{\rmdefault}{\mddefault}{\updefault}{$b$}%
}}}}
\put(5991,-1736){\makebox(0,0)[lb]{\smash{{\SetFigFont{10}{12.0}{\rmdefault}{\mddefault}{\updefault}{$aq$}%
}}}}
\put(6688,-1759){\makebox(0,0)[lb]{\smash{{\SetFigFont{10}{12.0}{\rmdefault}{\mddefault}{\updefault}{$b$}%
}}}}
\put(6898,-1744){\makebox(0,0)[lb]{\smash{{\SetFigFont{10}{12.0}{\rmdefault}{\mddefault}{\updefault}{$aq$}%
}}}}
\put(7671,-1774){\makebox(0,0)[lb]{\smash{{\SetFigFont{10}{12.0}{\rmdefault}{\mddefault}{\updefault}{$b$}%
}}}}
\put(7851,-1766){\makebox(0,0)[lb]{\smash{{\SetFigFont{10}{12.0}{\rmdefault}{\mddefault}{\updefault}{$aq$}%
}}}}
\put(8631,-1781){\makebox(0,0)[lb]{\smash{{\SetFigFont{10}{12.0}{\rmdefault}{\mddefault}{\updefault}{$b$}%
}}}}
\put(5961,-2359){\makebox(0,0)[lb]{\smash{{\SetFigFont{10}{12.0}{\rmdefault}{\mddefault}{\updefault}{$cq$}%
}}}}
\put(6553,-2381){\makebox(0,0)[lb]{\smash{{\SetFigFont{10}{12.0}{\rmdefault}{\mddefault}{\updefault}{$dq$}%
}}}}
\put(6921,-2396){\makebox(0,0)[lb]{\smash{{\SetFigFont{10}{12.0}{\rmdefault}{\mddefault}{\updefault}{$cq^2$}%
}}}}
\put(7483,-2389){\makebox(0,0)[lb]{\smash{{\SetFigFont{10}{12.0}{\rmdefault}{\mddefault}{\updefault}{$dq^2$}%
}}}}
\put(7873,-2396){\makebox(0,0)[lb]{\smash{{\SetFigFont{10}{12.0}{\rmdefault}{\mddefault}{\updefault}{$cq^3$}%
}}}}
\put(8428,-2404){\makebox(0,0)[lb]{\smash{{\SetFigFont{10}{12.0}{\rmdefault}{\mddefault}{\updefault}{$dq^3$}%
}}}}
\put(5976,-784){\makebox(0,0)[lb]{\smash{{\SetFigFont{10}{12.0}{\rmdefault}{\mddefault}{\updefault}{$aq$}%
}}}}
\put(6688,-784){\makebox(0,0)[lb]{\smash{{\SetFigFont{10}{12.0}{\rmdefault}{\mddefault}{\updefault}{$b$}%
}}}}
\put(6921,-776){\makebox(0,0)[lb]{\smash{{\SetFigFont{10}{12.0}{\rmdefault}{\mddefault}{\updefault}{$aq$}%
}}}}
\put(7626,-791){\makebox(0,0)[lb]{\smash{{\SetFigFont{10}{12.0}{\rmdefault}{\mddefault}{\updefault}{$b$}%
}}}}
\put(7896,-769){\makebox(0,0)[lb]{\smash{{\SetFigFont{10}{12.0}{\rmdefault}{\mddefault}{\updefault}{$aq$}%
}}}}
\put(8586,-784){\makebox(0,0)[lb]{\smash{{\SetFigFont{10}{12.0}{\rmdefault}{\mddefault}{\updefault}{$b$}%
}}}}
\put(5923,-1406){\makebox(0,0)[lb]{\smash{{\SetFigFont{10}{12.0}{\rmdefault}{\mddefault}{\updefault}{$cq^2$}%
}}}}
\put(6568,-1429){\makebox(0,0)[lb]{\smash{{\SetFigFont{10}{12.0}{\rmdefault}{\mddefault}{\updefault}{$dq^2$}%
}}}}
\put(6906,-1436){\makebox(0,0)[lb]{\smash{{\SetFigFont{10}{12.0}{\rmdefault}{\mddefault}{\updefault}{$cq^3$}%
}}}}
\put(7506,-1429){\makebox(0,0)[lb]{\smash{{\SetFigFont{10}{12.0}{\rmdefault}{\mddefault}{\updefault}{$dq^3$}%
}}}}
\put(7858,-1444){\makebox(0,0)[lb]{\smash{{\SetFigFont{10}{12.0}{\rmdefault}{\mddefault}{\updefault}{$cq^4$}%
}}}}
\put(8443,-1429){\makebox(0,0)[lb]{\smash{{\SetFigFont{10}{12.0}{\rmdefault}{\mddefault}{\updefault}{$dq^4$}%
}}}}
\end{picture}}
\caption{Illustrating the replacement rule in  Lemma \ref{T1}. The white circles indicate the vertices $v_1,v_2,\dotsc,v_n$.}
\label{T1b}
\end{figure}

\begin{lem}\label{T1}
Let $G$ be a graph and let  $\{v_1,\dotsc,v_n\}$ be an ordered subset of its vertices. Then

\begin{equation}\label{T1eq1}
\M\left(AR_{m,n}\left(\wt_{c,d}^{a,b}(q)\right)\#G\right)=(ad+bc)^{m}q^{\frac{m(m-1)}{2}}\M\left({}_|AR_{m-\frac12,n-1}\left(\wt_{c,d}^{aq,b}(q)\right)\#G\right),
\end{equation}
where  ${}_|AR_{m-\frac12,n-1}\left(\wt_{c,d}^{a,b}(q)\right)$ is obtained from the graph $AR_{m-\frac12,n-1}\left(\wt_{c,d}^{a,b}(q)\right)$ by appending vertical edges from their bottommost vertices;  and where the connected sum acts on $G$ along $\{v_1,\dotsc,v_n\}$, and on other summands along their bottommost vertices (ordered from left to right).
\end{lem}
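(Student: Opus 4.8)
\noindent The plan is to obtain \eqref{T1eq1} by applying urban renewal to the $m$ diamond-faces of $AR_{m,n}\left(\wt_{c,d}^{a,b}(q)\right)$ that lie in its leftmost column, one in each row, and then cleaning up by removing forced edges and rescaling. After the vertex-splittings (Lemma~\ref{VS}) needed to separate these $m$ faces from one another and from the neighboring faces, a neighborhood of the face in row~$i$, column~$1$ becomes a copy of the gadget $K$ of Figure~\ref{spider1}, the split-off vertices playing the roles of $A,B,C,D$. Read clockwise from its northwest edge this face has weights $a,b,dq^{i-1},cq^{i-1}$, so its two products of opposite edge-weights are $a\,dq^{i-1}$ and $b\,cq^{i-1}$; hence the Spider Lemma (Lemma~\ref{spider}) replaces it while multiplying the matching generating function by $\Delta_i=adq^{i-1}+bcq^{i-1}=(ad+bc)q^{i-1}$. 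Doing this in every row multiplies the generating function by $\prod_{i=1}^{m}\Delta_i=(ad+bc)^m q^{\,0+1+\cdots+(m-1)}=(ad+bc)^m q^{\frac{m(m-1)}{2}}$, precisely the prefactor on the right-hand side of \eqref{T1eq1}.

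Next I would remove forced edges. Urban renewal produces degree-two vertices along the newly created chain of small four-cycles that replaces the leftmost column, and deleting the forced edge at such a vertex (together with its endpoints) forces the next one; tracing this cascade, one sees that column~$1$ collapses onto column~$2$ and that, along the bottom, the deletions propagate to the right, swallowing the old bottommost vertices and leaving a single row of pendant vertical edges beneath the new bottom row of diamonds --- that is, the graph ${}_|AR_{m-\frac12,n-1}$. Applying the Star Lemma (Lemma~\ref{star}) at the appropriate vertices then clears the denominators $\Delta_i$ and renormalizes the remaining weights, and a face-by-face check (after the re-indexing of rows and columns forced by the collapse) shows the new weight assignment to be exactly $\wt_{c,d}^{aq,b}(q)$: the weight $a$ has become $aq$, while $b$, $c$, $d$ are unchanged. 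Since $G$ and the vertices $v_1,\dots,v_n$ are untouched throughout --- the graph is glued to $G$ along the bottom of the Aztec part, and it is the urban-renewal move at the bottom-left face that produces the appended vertical edges --- the connected sum on the right acts on $G$ along $\{v_1,\dots,v_n\}$ and on ${}_|AR_{m-\frac12,n-1}$ along its bottommost vertices, which gives \eqref{T1eq1}.

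I expect the real work to be in this second step: verifying that the forced-edge cascade behaves exactly as claimed --- so that the leftmost column is absorbed into a baseless row plus one appended row of vertical edges, with nothing extraneous surviving --- and that the Star-Lemma rescalings assemble precisely into $\wt_{c,d}^{aq,b}(q)$ with no stray scalar left over. I would first carry everything out explicitly in the case $m=3$, $n=4$ depicted in Figure~\ref{T1b} to fix the pattern, and then treat the general case either by induction on the number of rows $m$ (peeling the leftmost column one row at a time) or by a direct local comparison of the two weighted graphs face by face.
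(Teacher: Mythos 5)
Your toolkit (vertex-splitting, Spider Lemma, forced edges, Star Lemma) is the right one, but the central structural claim --- that urban renewal applied only to the $m$ faces of the leftmost column, followed by a forced-edge cascade, converts $AR_{m,n}$ into ${}_|AR_{m-\frac12,n-1}$ --- is false, and it is precisely the step you flagged as ``the real work.'' A degree-two vertex does not force any edge (only degree-one vertices do), so the cascade you describe never starts. Concretely: after the spider moves on column $1$, the only forced edges are the $m$ pendant edges created by splitting the leftmost boundary vertices, plus one at the top; removing them leaves a zigzag path on the $2m$ surviving outer vertices of the renewed faces, grafted onto the west vertices of column $2$. Columns $2,\dotsc,n$ are completely untouched, and in particular their bottommost vertices are still present and still attached to $G$ --- nothing ``propagates along the bottom,'' and no appended vertical edges appear. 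The resulting graph is therefore strictly larger than ${}_|AR_{m-\frac12,n-1}\#G$ (it contains an $AR_{m,n-1}$-shaped block \emph{with} its base, plus a $2m$-vertex remnant of column $1$), so it cannot be the right-hand side of \eqref{T1eq1}; nor is it of the original form, so the step does not even set up an induction. The paper instead applies the Spider Lemma to \emph{all} $mn$ shaded faces (Figure \ref{ProofT2}): it is the renewal of the bottom row of faces that absorbs the old bottommost vertices and leaves the pendant vertical edges, and the renewal of every column that drops $n$ to $n-1$; the forced edges are then the $m$ leftmost horizontal, $m$ rightmost horizontal and $n$ topmost vertical ones.

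The numerical agreement that motivated your plan is a telescoping artifact, not evidence that one column suffices. In the paper's computation the spider move at face $(i,j)$ contributes $q^{i+j-2}\Delta$ with $\Delta=ad+bc$, and the Star Lemma applied at the cell $(i,j)$ of the new graph contributes $\left(q^{i+j-1}\Delta\right)^{-1}$, which exactly cancels the spider factor of face $(i,j+1)$; the net prefactor therefore collapses to $\prod_{i=1}^{m}q^{i-1}\Delta=(ad+bc)^{m}q^{\frac{m(m-1)}{2}}$, which coincides numerically with the column-$1$ product even though all $mn$ renewals are needed for the graph transformation. Relatedly, your bookkeeping treats the Star Lemma as cost-free (``clears the denominators''), but Lemma \ref{star} multiplies $\M$ by the rescaling factor each time it is invoked, so those $m(n-1)$ factors must enter the prefactor --- as they do in the paper's final display. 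To repair the argument, perform the renewal on all $mn$ faces and track the Star Lemma factors explicitly.
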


The replacement in Lemma \ref{T1} is illustrated in Figure \ref{T1b} for $m=3$ and $n=4$. We note that the unweighted version (when $a=b=c=d=q=1$) of Lemma \ref{T1}  was introduced in \cite{Tri} (see Lemma 3.5).

\begin{figure}\centering
\resizebox{!}{12cm}{
\begin{picture}(0,0)%
\includegraphics{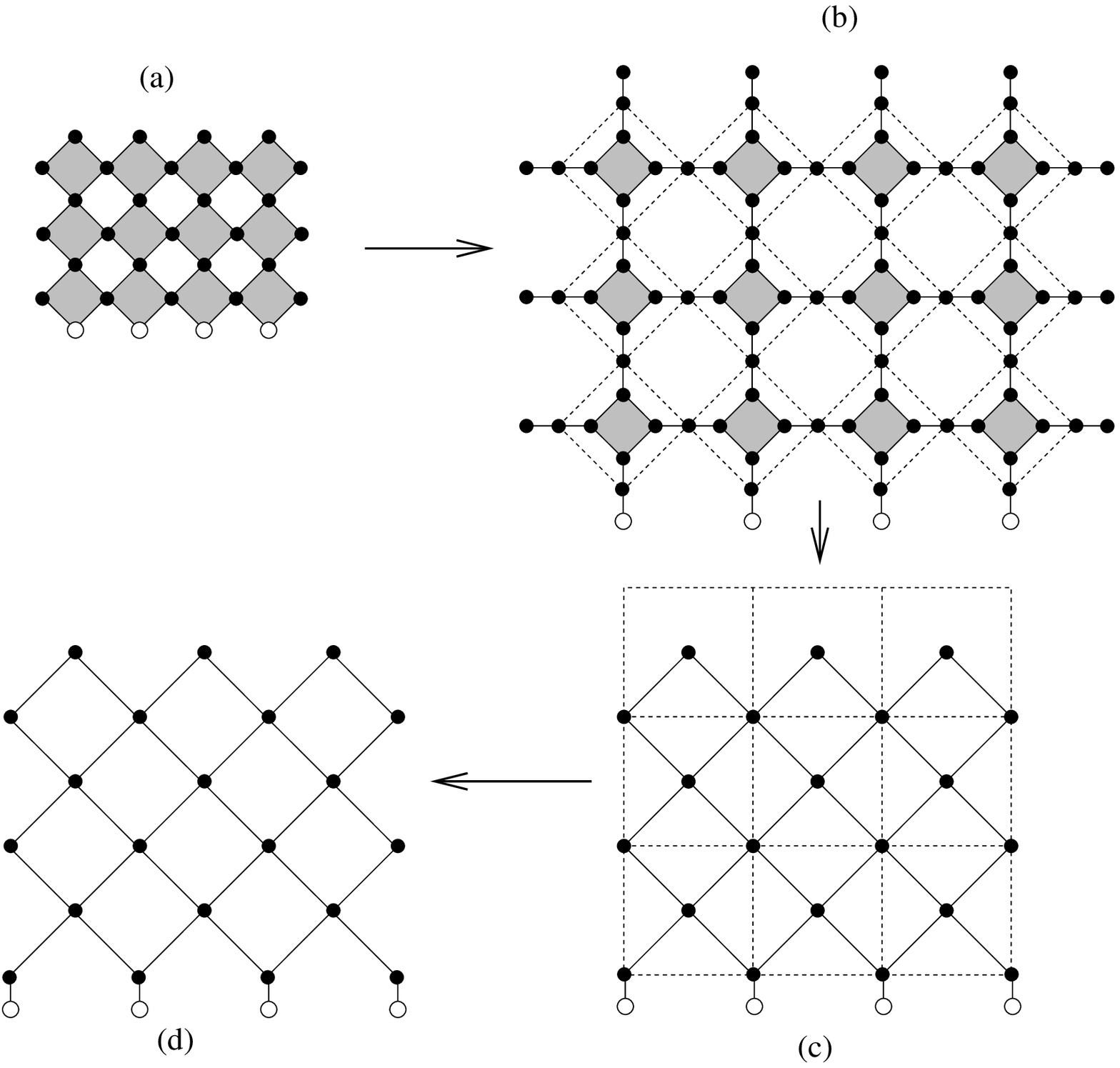}%
\end{picture}%
\setlength{\unitlength}{3947sp}%
\begingroup\makeatletter\ifx\SetFigFont\undefined%
\gdef\SetFigFont#1#2#3#4#5{%
  \reset@font\fontsize{#1}{#2pt}%
  \fontfamily{#3}\fontseries{#4}\fontshape{#5}%
  \selectfont}%
\fi\endgroup%
\begin{picture}(8167,7567)(1104,-10469)
\put(5694,-7516){\makebox(0,0)[lb]{\smash{{\SetFigFont{10}{12.0}{\rmdefault}{\mddefault}{\updefault}{$\frac{a}{q^2\Delta}$}%
}}}}
\put(6369,-7509){\makebox(0,0)[lb]{\smash{{\SetFigFont{10}{12.0}{\rmdefault}{\mddefault}{\updefault}{$\frac{b}{q^3\Delta}$}%
}}}}
\put(6736,-7509){\makebox(0,0)[lb]{\smash{{\SetFigFont{10}{12.0}{\rmdefault}{\mddefault}{\updefault}{$\frac{a}{q^3\Delta}$}%
}}}}
\put(7306,-7509){\makebox(0,0)[lb]{\smash{{\SetFigFont{10}{12.0}{\rmdefault}{\mddefault}{\updefault}{$\frac{b}{q^4\Delta}$}%
}}}}
\put(7704,-7531){\makebox(0,0)[lb]{\smash{{\SetFigFont{10}{12.0}{\rmdefault}{\mddefault}{\updefault}{$\frac{a}{q^4\Delta}$}%
}}}}
\put(8274,-7516){\makebox(0,0)[lb]{\smash{{\SetFigFont{10}{12.0}{\rmdefault}{\mddefault}{\updefault}{$\frac{b}{q^5\Delta}$}%
}}}}
\put(5746,-8169){\makebox(0,0)[lb]{\smash{{\SetFigFont{10}{12.0}{\rmdefault}{\mddefault}{\updefault}{$\frac{c}{\Delta}$}%
}}}}
\put(6476,-8191){\makebox(0,0)[lb]{\smash{{\SetFigFont{10}{12.0}{\rmdefault}{\mddefault}{\updefault}{$\frac{d}{\Delta}$}%
}}}}
\put(6712,-8191){\makebox(0,0)[lb]{\smash{{\SetFigFont{10}{12.0}{\rmdefault}{\mddefault}{\updefault}{$\frac{c}{\Delta}$}%
}}}}
\put(7421,-8191){\makebox(0,0)[lb]{\smash{{\SetFigFont{10}{12.0}{\rmdefault}{\mddefault}{\updefault}{$\frac{d}{\Delta}$}%
}}}}
\put(7657,-8191){\makebox(0,0)[lb]{\smash{{\SetFigFont{10}{12.0}{\rmdefault}{\mddefault}{\updefault}{$\frac{c}{\Delta}$}%
}}}}
\put(8366,-8191){\makebox(0,0)[lb]{\smash{{\SetFigFont{10}{12.0}{\rmdefault}{\mddefault}{\updefault}{$\frac{d}{\Delta}$}%
}}}}
\put(5709,-8499){\makebox(0,0)[lb]{\smash{{\SetFigFont{10}{12.0}{\rmdefault}{\mddefault}{\updefault}{$\frac{a}{q\Delta}$}%
}}}}
\put(6361,-8499){\makebox(0,0)[lb]{\smash{{\SetFigFont{10}{12.0}{\rmdefault}{\mddefault}{\updefault}{$\frac{b}{q^2\Delta}$}%
}}}}
\put(6714,-8506){\makebox(0,0)[lb]{\smash{{\SetFigFont{10}{12.0}{\rmdefault}{\mddefault}{\updefault}{$\frac{a}{q^2\Delta}$}%
}}}}
\put(7336,-8499){\makebox(0,0)[lb]{\smash{{\SetFigFont{10}{12.0}{\rmdefault}{\mddefault}{\updefault}{$\frac{b}{q^3\Delta}$}%
}}}}
\put(7704,-8491){\makebox(0,0)[lb]{\smash{{\SetFigFont{10}{12.0}{\rmdefault}{\mddefault}{\updefault}{$\frac{a}{q^3\Delta}$}%
}}}}
\put(8281,-8499){\makebox(0,0)[lb]{\smash{{\SetFigFont{10}{12.0}{\rmdefault}{\mddefault}{\updefault}{$\frac{b}{q^4\Delta}$}%
}}}}
\put(5754,-9144){\makebox(0,0)[lb]{\smash{{\SetFigFont{10}{12.0}{\rmdefault}{\mddefault}{\updefault}{$\frac{c}{\Delta}$}%
}}}}
\put(6453,-9158){\makebox(0,0)[lb]{\smash{{\SetFigFont{10}{12.0}{\rmdefault}{\mddefault}{\updefault}{$\frac{d}{\Delta}$}%
}}}}
\put(6689,-9158){\makebox(0,0)[lb]{\smash{{\SetFigFont{10}{12.0}{\rmdefault}{\mddefault}{\updefault}{$\frac{c}{\Delta}$}%
}}}}
\put(7398,-9158){\makebox(0,0)[lb]{\smash{{\SetFigFont{10}{12.0}{\rmdefault}{\mddefault}{\updefault}{$\frac{d}{\Delta}$}%
}}}}
\put(7634,-9158){\makebox(0,0)[lb]{\smash{{\SetFigFont{10}{12.0}{\rmdefault}{\mddefault}{\updefault}{$\frac{c}{\Delta}$}%
}}}}
\put(8343,-9158){\makebox(0,0)[lb]{\smash{{\SetFigFont{10}{12.0}{\rmdefault}{\mddefault}{\updefault}{$\frac{d}{\Delta}$}%
}}}}
\put(5746,-9444){\makebox(0,0)[lb]{\smash{{\SetFigFont{10}{12.0}{\rmdefault}{\mddefault}{\updefault}{$\frac{a}{\Delta}$}%
}}}}
\put(6399,-9474){\makebox(0,0)[lb]{\smash{{\SetFigFont{10}{12.0}{\rmdefault}{\mddefault}{\updefault}{$\frac{b}{q\Delta}$}%
}}}}
\put(6699,-9459){\makebox(0,0)[lb]{\smash{{\SetFigFont{10}{12.0}{\rmdefault}{\mddefault}{\updefault}{$\frac{a}{q\Delta}$}%
}}}}
\put(7299,-9436){\makebox(0,0)[lb]{\smash{{\SetFigFont{10}{12.0}{\rmdefault}{\mddefault}{\updefault}{$\frac{b}{q^2\Delta}$}%
}}}}
\put(7689,-9429){\makebox(0,0)[lb]{\smash{{\SetFigFont{10}{12.0}{\rmdefault}{\mddefault}{\updefault}{$\frac{a}{q^2\Delta}$}%
}}}}
\put(8296,-9451){\makebox(0,0)[lb]{\smash{{\SetFigFont{10}{12.0}{\rmdefault}{\mddefault}{\updefault}{$\frac{b}{q^3\Delta}$}%
}}}}
\put(1119,-9611){\makebox(0,0)[lb]{\smash{{\SetFigFont{10}{12.0}{\rmdefault}{\mddefault}{\updefault}{$aq$}%
}}}}
\put(1959,-9604){\makebox(0,0)[lb]{\smash{{\SetFigFont{10}{12.0}{\rmdefault}{\mddefault}{\updefault}{$b$}%
}}}}
\put(2146,-9611){\makebox(0,0)[lb]{\smash{{\SetFigFont{10}{12.0}{\rmdefault}{\mddefault}{\updefault}{$aq$}%
}}}}
\put(2889,-9611){\makebox(0,0)[lb]{\smash{{\SetFigFont{10}{12.0}{\rmdefault}{\mddefault}{\updefault}{$b$}%
}}}}
\put(3084,-9604){\makebox(0,0)[lb]{\smash{{\SetFigFont{10}{12.0}{\rmdefault}{\mddefault}{\updefault}{$aq$}%
}}}}
\put(3856,-9596){\makebox(0,0)[lb]{\smash{{\SetFigFont{10}{12.0}{\rmdefault}{\mddefault}{\updefault}{$b$}%
}}}}
\put(1239,-8621){\makebox(0,0)[lb]{\smash{{\SetFigFont{10}{12.0}{\rmdefault}{\mddefault}{\updefault}{$aq$}%
}}}}
\put(1936,-8644){\makebox(0,0)[lb]{\smash{{\SetFigFont{10}{12.0}{\rmdefault}{\mddefault}{\updefault}{$b$}%
}}}}
\put(2146,-8629){\makebox(0,0)[lb]{\smash{{\SetFigFont{10}{12.0}{\rmdefault}{\mddefault}{\updefault}{$aq$}%
}}}}
\put(2919,-8659){\makebox(0,0)[lb]{\smash{{\SetFigFont{10}{12.0}{\rmdefault}{\mddefault}{\updefault}{$b$}%
}}}}
\put(3099,-8651){\makebox(0,0)[lb]{\smash{{\SetFigFont{10}{12.0}{\rmdefault}{\mddefault}{\updefault}{$aq$}%
}}}}
\put(3879,-8666){\makebox(0,0)[lb]{\smash{{\SetFigFont{10}{12.0}{\rmdefault}{\mddefault}{\updefault}{$b$}%
}}}}
\put(1209,-9244){\makebox(0,0)[lb]{\smash{{\SetFigFont{10}{12.0}{\rmdefault}{\mddefault}{\updefault}{$cq$}%
}}}}
\put(1801,-9266){\makebox(0,0)[lb]{\smash{{\SetFigFont{10}{12.0}{\rmdefault}{\mddefault}{\updefault}{$dq$}%
}}}}
\put(2169,-9281){\makebox(0,0)[lb]{\smash{{\SetFigFont{10}{12.0}{\rmdefault}{\mddefault}{\updefault}{$cq^2$}%
}}}}
\put(2731,-9274){\makebox(0,0)[lb]{\smash{{\SetFigFont{10}{12.0}{\rmdefault}{\mddefault}{\updefault}{$dq^2$}%
}}}}
\put(3121,-9281){\makebox(0,0)[lb]{\smash{{\SetFigFont{10}{12.0}{\rmdefault}{\mddefault}{\updefault}{$cq^3$}%
}}}}
\put(3676,-9289){\makebox(0,0)[lb]{\smash{{\SetFigFont{10}{12.0}{\rmdefault}{\mddefault}{\updefault}{$dq^3$}%
}}}}
\put(1224,-7669){\makebox(0,0)[lb]{\smash{{\SetFigFont{10}{12.0}{\rmdefault}{\mddefault}{\updefault}{$aq$}%
}}}}
\put(1936,-7669){\makebox(0,0)[lb]{\smash{{\SetFigFont{10}{12.0}{\rmdefault}{\mddefault}{\updefault}{$b$}%
}}}}
\put(2169,-7661){\makebox(0,0)[lb]{\smash{{\SetFigFont{10}{12.0}{\rmdefault}{\mddefault}{\updefault}{$aq$}%
}}}}
\put(2874,-7676){\makebox(0,0)[lb]{\smash{{\SetFigFont{10}{12.0}{\rmdefault}{\mddefault}{\updefault}{$b$}%
}}}}
\put(3144,-7654){\makebox(0,0)[lb]{\smash{{\SetFigFont{10}{12.0}{\rmdefault}{\mddefault}{\updefault}{$aq$}%
}}}}
\put(3834,-7669){\makebox(0,0)[lb]{\smash{{\SetFigFont{10}{12.0}{\rmdefault}{\mddefault}{\updefault}{$b$}%
}}}}
\put(1171,-8291){\makebox(0,0)[lb]{\smash{{\SetFigFont{10}{12.0}{\rmdefault}{\mddefault}{\updefault}{$cq^2$}%
}}}}
\put(1816,-8314){\makebox(0,0)[lb]{\smash{{\SetFigFont{10}{12.0}{\rmdefault}{\mddefault}{\updefault}{$dq^2$}%
}}}}
\put(2154,-8321){\makebox(0,0)[lb]{\smash{{\SetFigFont{10}{12.0}{\rmdefault}{\mddefault}{\updefault}{$cq^3$}%
}}}}
\put(2754,-8314){\makebox(0,0)[lb]{\smash{{\SetFigFont{10}{12.0}{\rmdefault}{\mddefault}{\updefault}{$dq^3$}%
}}}}
\put(3106,-8329){\makebox(0,0)[lb]{\smash{{\SetFigFont{10}{12.0}{\rmdefault}{\mddefault}{\updefault}{$cq^4$}%
}}}}
\put(3691,-8314){\makebox(0,0)[lb]{\smash{{\SetFigFont{10}{12.0}{\rmdefault}{\mddefault}{\updefault}{$dq^4$}%
}}}}
\end{picture}}
\caption{Illustrating the proof of Lemma \ref{T1}. }
\label{ProofT2}
\end{figure}

\begin{proof}
The proof is based on Figure \ref{ProofT2}, for $m=3$ and $n=4$.  First, we apply Vertex-Splitting Lemma \ref{VS} to vertices of ${}_|AR_{m,n}\left(\wt_{c,d}^{a,b}(q)\right)$ in the graph on the left-hand side of (\ref{T1eq1}) as in Figures \ref{ProofT2}(a) and (b); the sides of the shaded diamonds are weighted as in the left picture in Figure \ref{T1b}. Denote by $G_1$ the resulting graph.

Next, we apply Spider Lemma \ref{spider} around all shaded diamonds in $G_1$, and remove all $m$ leftmost horizontal edges, $m$ rightmost horizontal edges and $n$ topmost vertical edges, which are forced (see Figure \ref{ProofT2}(b)). We get the graph $G_2=AR_{m-\frac{1}{2},n-1}(\wt')\#G$, where $AR_{m-\frac{1}{2},n-1}(\wt')$ is a weighted version of $\AR_{m-\frac{1}{2},n-1}$ with edges weighted as in Figure \ref{ProofT2}(c), and where $\Delta=ad+bc$.

Finally, we use Star Lemma \ref{star} to change the edge-weights in the graph $G_2$. Divide the graph $\AR_{m-\frac{1}{2},n-1}(\wt')$, except for vertical edges, into $m(n-1)$ subgraphs restricted by dotted squares in Figure \ref{ProofT2}(c). Apply the Star Lemma with factor $q^{i+j-1}\Delta$ to the central vertex of the dotted square in row $i$ and column $j$, for $i=1,2,\dotsc,m$ and $j=1,2,\dotsc,n-1$. This way, we obtain the graph on the right-hand side of (\ref{T1eq1}).

By Vertex-Splitting, Spider and Star Lemmas, we get
\begin{align}
\M&\left({}_|AR_{m,n}\left(\wt_{c,d}^{a,b}(q)\right)\#G\right)=\M(G_1)\\
&=\M(G_2)\prod_{1\leq i, j\leq n} (q^{i+j-2}\Delta)\\
&=\M\left(AR_{m-\frac{1}{2},n-1}(\wt_{c,d}^{aq,b}(q))\right) \prod_{1\leq i \leq m, 1\leq j\leq n-1} \left(q^{i+j-1}\Delta\right)^{-1} \prod_{1\leq i, j\leq n} \left(q^{i+j-2}\Delta\right),
\end{align}
which implies (\ref{T1eq1}).
\end{proof}

\section{Matching generating function of  weighted Aztec rectangle graphs}
In this section, we give an explicit formula for the matching generating function of the graph $\AR_{m,n}\left(\wt_{c,d}^{a,b}(q)\right)$, where $m-n$ vertices on the base have been removed.

Given $\lambda_1\geq \lambda_2\geq\dotsc \lambda_k\geq 0$, a \emph{plane partition} of shape $(\lambda_1,\lambda_2,\dotsc,\lambda_k)$ is an array of non-negative integers of the form 
\begin{center}
\begin{tabular}{rccccccccc}
$n_{1,1}$   &$n_{1,2}$                 &$n_{1,3}$               & $\dotsc$               &  $\dotsc$                        & $\dotsc$                            &   $n_{1,\lambda_1}$ \\\noalign{\smallskip\smallskip}
$n_{2,1}$   &  $n_{2,2}$              & $n_{2,3}$             &  $\dotsc$               & $\dotsc$                        &         $n_{2,\lambda_2}$&          \\\noalign{\smallskip\smallskip}
$\vdots$    &       $\vdots$            & $\vdots$                &        $\vdots$         &     $\vdots$                &    &              \\\noalign{\smallskip\smallskip}
 $n_{k,1}$  &  $n_{k,2}$               & $n_{k,3}$              &     $\dotsc$             &   $n_{k,\lambda_k}$ &                                          &           \\\noalign{\smallskip\smallskip}
\end{tabular},
\end{center}
where the entries are weakly decreasing across the rows and down the columns. A \emph{column-strict plane partition} is a plane partition having entries in each column strictly decreasing. We refer reader to \cite{Stanley} for properties of column-strict plane partitions.

A \emph{semihexagon} $\mathcal{SH}_{a,b}$ is the upper half of the semi-regular hexagon of side-lengths $a,b,b,a,b,b$ (in clockwise order, starting from the northwest side) on the triangular lattice. We are interested in the (lozenge) tilings of the semihexagon $\mathcal{SH}_{a,b}$ with $a$ up-pointing unit triangles removed from the base, which are called \emph{dents}. Assume that the positions of the dents are $1\leq s_1<s_2<\dotsc<s_a\leq a+b$, we denote by $\mathcal{SH}_{a,b}(s_1,\dotsc,s_a)$ the semihexagon with dents (see Figure \ref{semihexagonex} for an example; the black unit triangles indicate the dents).

\begin{figure}\centering
\includegraphics[width=5cm]{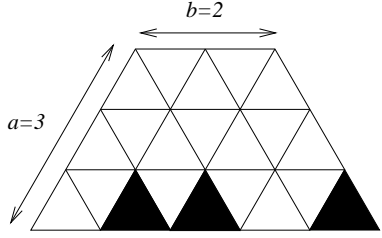}
\caption{The semihexagon with dents $\mathcal{SH}_{3,2}(2,3,5)$.}
\label{semihexagonex}
\end{figure}

\begin{thm}\label{weightrectangle}
Assume that $m$ and $n$ are two positive integers, so that $m<n$. The matching generating function of the weighted Aztec rectangle graph $AR_{m,n}\left(\wt_{c,d}^{a,b}(q)\right)$, where all bottommost vertices, except for the $s_1$-st, the $s_2$-nd, $\dotsc$ and the $s_m$-th ones, have been removed, equals 
\begin{equation}
q^{\frac{(m-1)m(m+1)}{3}+\sum_{i=1}^{m}(s_i-i)}a^{\sum_{i=1}^{m}(s_i-i)}b^{m(n-m)-\sum_{i=1}^{m}(s_i-i)}\prod_{k=1}^{m}\Delta_{k}^{m-k+1}\cdot\prod_{1\leq i<j\leq m}\frac{q^{s_j}-q^{s_i}}{q^{j}-q^{i}},
\end{equation}
where $\Delta_k=adq^{k-1}+bc$.
\end{thm}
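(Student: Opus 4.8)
The plan is to prove Theorem \ref{weightrectangle} by induction on $m$, using Lemma \ref{T1} to peel off one row of diamond-faces at a time, reducing the weighted Aztec rectangle graph on $m$ rows to a baseless one on $m-\tfrac12$ rows (plus appended vertical edges), and then recognizing the latter as the dual graph of a semihexagon with dents. First I would set up the base case $m=1$: here $AR_{1,n}\left(\wt_{c,d}^{a,b}(q)\right)$ with all but one bottom vertex removed is essentially a path, whose matchings are counted directly; the single surviving bottom vertex at position $s_1$ forces a unique matching, and its weight is a monomial $q^{s_1-1}a^{s_1-1}b^{n-1}\Delta_1^{?}$ — I would check this against the claimed formula (the product over $1\le i<j\le m$ being empty, and $\Delta_1 = ad+bc$ appearing to the first power when $n>1$, or rather to the power dictated by how many diamond-faces survive).

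For the inductive step I would apply Lemma \ref{T1} with $G$ taken to be the (empty, or trivial) remainder graph, obtaining
\begin{equation*}
\M\left(AR_{m,n}\left(\wt_{c,d}^{a,b}(q)\right)\text{ with dents}\right) = (ad+bc)^m q^{\frac{m(m-1)}{2}}\,\M\left({}_|AR_{m-\frac12,n-1}\left(\wt_{c,d}^{aq,b}(q)\right)\text{ with dents}\right).
\end{equation*}
The appended vertical edges at the bottom of ${}_|AR_{m-\frac12,n-1}$ are forced exactly at the positions of the non-removed bottom vertices, so removing them (and recording their weights, which are $1$ under the standard assignment) converts the baseless graph with appended edges into an ordinary Aztec rectangle graph $AR_{m-1,n-1}$ with a new set of $m-1$ surviving bottom vertices at shifted positions. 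The key bookkeeping step is to track how the dent positions $s_1<\dots<s_m$ transform: peeling the bottom row removes one surviving vertex (the leftmost, or whichever is forced by the strip of vertical dominoes in the minimal tiling) and shifts the rest, so that $(s_1,\dots,s_m) \mapsto (s_2-1,\dots,s_m-1)$ or a similar reindexing. Then I would invoke the inductive hypothesis for $AR_{m-1,n-1}\left(\wt_{c,d}^{aq,b}(q)\right)$ with the shifted dents, substituting $a \mapsto aq$ throughout, and finally verify that the accumulated prefactors $(ad+bc)^m q^{m(m-1)/2}$ together with the $a\mapsto aq$ substitution in the inductive formula reassemble into the claimed closed form.

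The main obstacle, and the place I expect to spend real effort, is the exponent arithmetic: reconciling the cubic term $\frac{(m-1)m(m+1)}{3}$ and the linear terms $\sum_i(s_i-i)$ in the exponents of $q$, $a$, $b$, together with the telescoping of the product $\prod_{k=1}^m \Delta_k^{m-k+1}$ (where $\Delta_k = adq^{k-1}+bc$) and the Vandermonde-type product $\prod_{i<j}\frac{q^{s_j}-q^{s_i}}{q^j-q^i}$, under the simultaneous substitution $a\mapsto aq$ and shift $s_i \mapsto s_{i+1}-1$. Under $a\mapsto aq$ one has $\Delta_k \mapsto adq^{k}+bc = \Delta_{k+1}$, so the product $\prod_{k=1}^{m-1}\Delta_k^{m-k}$ from the inductive hypothesis becomes $\prod_{k=2}^{m}\Delta_k^{m-k+1}$, and multiplying by the new factor $\Delta_1^m$ from Lemma \ref{T1} — wait, Lemma \ref{T1} contributes $(ad+bc)^m = \Delta_1^m$ — gives $\prod_{k=1}^{m}\Delta_k^{m-k+1}$ exactly, which is encouraging. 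The Vandermonde product requires the identity $\prod_{2\le i<j\le m}\frac{q^{(s_j-1)}-q^{(s_i-1)}}{q^j-q^i}\cdot\prod_{j=2}^m\frac{q^{s_j}-q^{s_1}}{q^j-q^1}$ reorganizing into $\prod_{1\le i<j\le m}\frac{q^{s_j}-q^{s_i}}{q^j-q^i}$ — which follows since $q^{s_j-1}-q^{s_i-1}=q^{-1}(q^{s_j}-q^{s_i})$ and $q^j-q^i = q\cdot(q^{j-1}-q^{i-1})$ for the shift, modulo a power of $q$ that must be absorbed into the monomial prefactor. I would carefully collect every stray power of $q$ (from the $q^{m(m-1)/2}$ factor, from the $a\mapsto aq$ substitution acting on $a^{\sum(s_i-i)}$, and from the Vandermonde rescaling) and check they sum to $\frac{(m-1)m(m+1)}{3}+\sum_{i=1}^m(s_i-i) - \left[\frac{(m-2)(m-1)m}{3}+\sum_{i=1}^{m-1}(s_{i+1}-1-i)\right]$; verifying this telescoping identity is the crux. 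Once the prefactors match, the theorem follows.
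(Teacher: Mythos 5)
Your strategy breaks down at the inductive step, and the failure is structural rather than a matter of exponent bookkeeping. After one application of Lemma \ref{T1} the graph you obtain is ${}_|AR_{m-\frac12,n-1}\left(\wt_{c,d}^{aq,b}(q)\right)$ connected to the dent structure; the appended vertical edges at the surviving positions $s_i$ are indeed forced, but what remains is a \emph{baseless} Aztec rectangle whose bottom fringe is a genuine piece of graph, not an ordinary $AR_{m-1,n-1}$ with dents at deterministically shifted positions. The matchings of that fringe are not forced: they encode one row of a lozenge tiling, so peeling a row produces a \emph{sum} over interlacing sequences of new dent positions (a branching rule), never a single term $(s_1,\dots,s_m)\mapsto(s_2-1,\dots,s_m-1)$. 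A two-row example already kills the proposal: for $m=2$, $n=3$, $a=b=c=d=q=1$ the number of matchings is $2^{3}(s_2-s_1)$, which equals $16$ for $(s_1,s_2)=(1,3)$ and $8$ for $(1,2)$, whereas your recursion returns $(ad+bc)^{2}q\cdot\M\left(AR_{1,2}\text{ with one dent}\right)=4\cdot 2=8$ independently of $(s_1,s_2)$; no choice of ``similar reindexing'' can fix a formula that has lost all dependence on the $s_i$. (Your telescoping of $\prod_{k}\Delta_k^{m-k+1}$ under $a\mapsto aq$ is correct and consistent with how that prefactor accumulates, but it cannot compensate for a missing factor that depends on the $s_i$.)

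What the paper does instead is iterate Lemma \ref{T1} $m$ times on the \emph{upper} part only, letting the leftover bottom fringes accumulate; once all rows of diamonds are eliminated, the accumulated fringes form the dual graph of the dented semihexagon $\mathcal{SH}_{m,n-m}(s_1,\dots,s_m)$ with explicit rhombus weights, which accounts for the factor $q^{\frac{(m-1)m(m+1)}{3}}\prod_{k=1}^{m}\Delta_k^{m-k+1}$. The entire dependence on the $s_i$ is then extracted not by recursion but by encoding the lozenge tilings as non-intersecting rhombi-paths, hence as column-strict plane partitions of shape $(s_m-m,\dots,s_1-1)$ with entries at most $m$, and quoting the known product formula for $\sum_{\pi}q^{|\pi|}$, which yields $q^{\sum_i(s_i-i)}\prod_{i<j}\frac{q^{s_j}-q^{s_i}}{q^{j}-q^{i}}$ together with the monomial in $a$ and $b$. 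If you wish to keep an inductive framework, you would have to replace your single-term step by the full branching sum and prove a $q$-Vandermonde/Pieri-type identity over interlacing sequences --- a genuinely harder route than the paper's.
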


\begin{figure}\centering
\includegraphics[width=7cm]{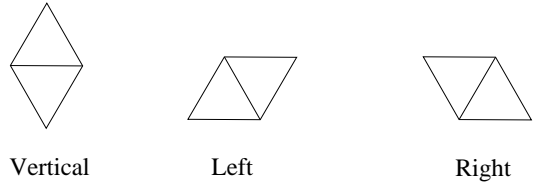}
\caption{Three types of rhombi.}
\label{rhumbustype}
\end{figure}

\begin{proof}
Denote by $G$ our Aztec rectangle graph with $n-m$ bottommost vertices removed.
Consider the graph $G'$ obtained from $AR_{m,n}\left(\wt_{c,d}^{a,b}(t,q)\right)$ by adding a vertical edge (with weight 1) at all bottommost vertices, except for the ones at the positions $s_i$'s (see Figure \ref{holeytransform2}(a), for $m=3$, $n=5$, $s_1=2$, $s_2=3$, $s_3=5$). Then by considering forced edges, we get $\M(G)=\M(G')$.

\medskip

Next, we apply a $m$-step transforming process based on Figure \ref{holeytransform2} as follows. First, apply the replacement rule in Lemma \ref{T1} as in Figures \ref{holeytransform2}(a) and (b): the part above the dotted line in graph (a) is replaced by the part above that line in graph (b). Second, we apply the same rule to replace the part above the upper dotted line in graph (b) by the part above the lower dotted line in graph (c). Keep doing this process until we eliminate all  rows of diamonds on the top of the resulting graph. Denote by $G''$ the final graph (see Figure \ref{holeytransform2}(d)).

By removing vertical forced edges at the bottom of $G''$, we get a the dual graph $\widetilde{G}$ of a weighted semi-hexagon with dents $\mathcal{SH}_{m,n-m}(s_1,\dotsc,s_{m})$. 
In particular, the \textit{left rhombi} on the level $k$ (the bottom is at the level 0) are weighted by $aq^{k+1}$, all \textit{right rhombi} are weighted by $b$, and all \textit{vertical rhombi} have weight 1 (see Figure \ref{rhumbustype} for three types of rhombi). By Lemma \ref{T1}, we obtain
\begin{equation}\label{masstransform}
\M\left(AR_{m,n}\left(\wt_{c,d}^{a,b}(q)\right)\right)=q^{\frac{(m-1)m(m+1)}{3}}\left(\prod_{k=1}^{m}\Delta_{k}^{m-k+1}\right) \M(\widetilde{G}).
\end{equation}

\begin{figure}\centering
\resizebox{!}{12cm}{
\begin{picture}(0,0)%
\includegraphics{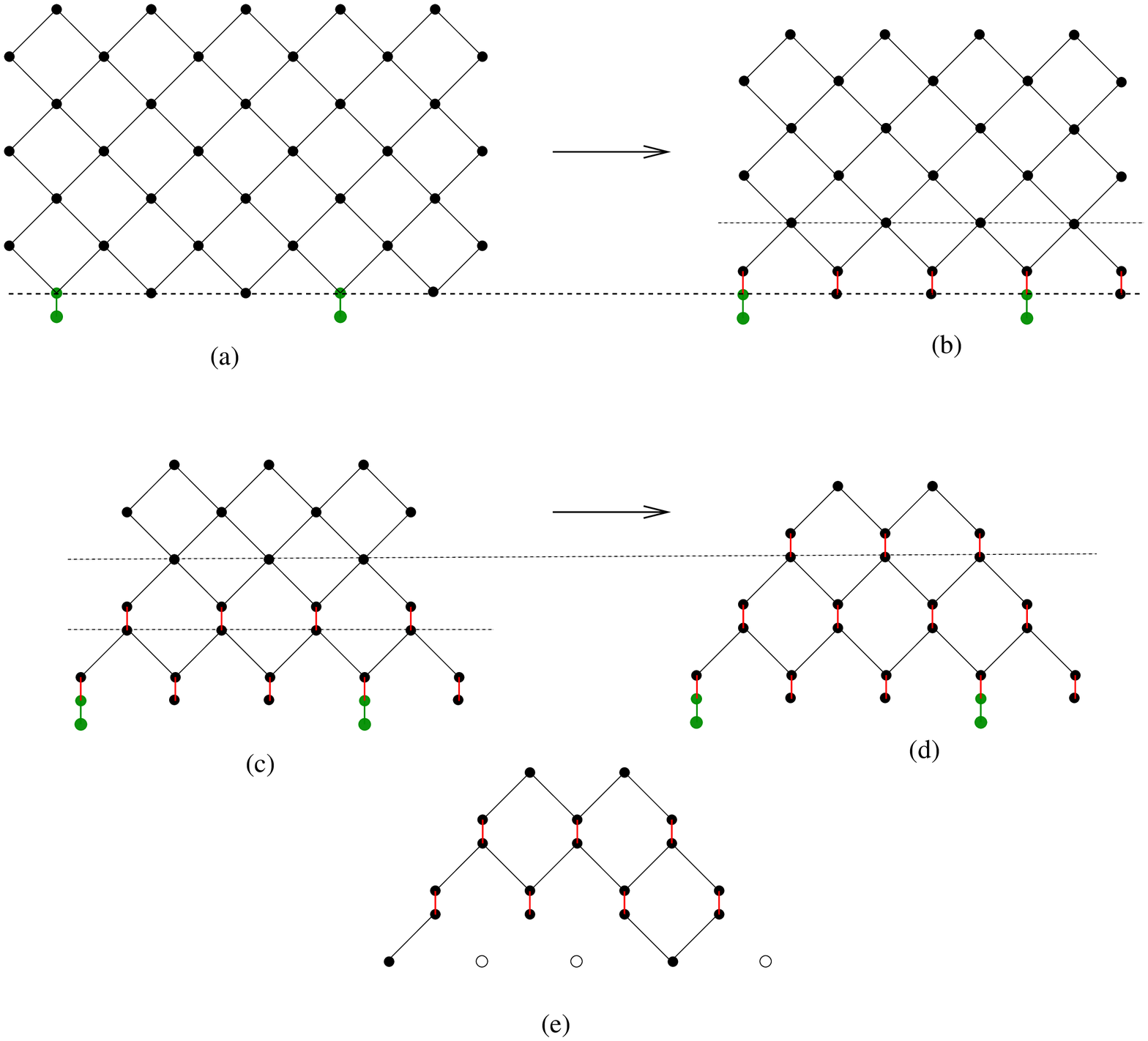}%
\end{picture}%
\setlength{\unitlength}{3947sp}%
\begingroup\makeatletter\ifx\SetFigFont\undefined%
\gdef\SetFigFont#1#2#3#4#5{%
  \reset@font\fontsize{#1}{#2pt}%
  \fontfamily{#3}\fontseries{#4}\fontshape{#5}%
  \selectfont}%
\fi\endgroup%
\begin{picture}(11413,10377)(1130,-20095)
\put(1359,-12564){\makebox(0,0)[lb]{\smash{{\SetFigFont{10}{12.0}{\rmdefault}{\mddefault}{\updefault}{$c$}%
}}}}
\put(1876,-12586){\makebox(0,0)[lb]{\smash{{\SetFigFont{10}{12.0}{\rmdefault}{\mddefault}{\updefault}{$d$}%
}}}}
\put(1235,-11951){\makebox(0,0)[lb]{\smash{{\SetFigFont{10}{12.0}{\rmdefault}{\mddefault}{\updefault}{$a$}%
}}}}
\put(1977,-11936){\makebox(0,0)[lb]{\smash{{\SetFigFont{10}{12.0}{\rmdefault}{\mddefault}{\updefault}{$b$}%
}}}}
\put(2195,-11936){\makebox(0,0)[lb]{\smash{{\SetFigFont{10}{12.0}{\rmdefault}{\mddefault}{\updefault}{$a$}%
}}}}
\put(2915,-11936){\makebox(0,0)[lb]{\smash{{\SetFigFont{10}{12.0}{\rmdefault}{\mddefault}{\updefault}{$b$}%
}}}}
\put(3177,-11936){\makebox(0,0)[lb]{\smash{{\SetFigFont{10}{12.0}{\rmdefault}{\mddefault}{\updefault}{$a$}%
}}}}
\put(3852,-11936){\makebox(0,0)[lb]{\smash{{\SetFigFont{10}{12.0}{\rmdefault}{\mddefault}{\updefault}{$b$}%
}}}}
\put(4107,-11928){\makebox(0,0)[lb]{\smash{{\SetFigFont{10}{12.0}{\rmdefault}{\mddefault}{\updefault}{$a$}%
}}}}
\put(4827,-11951){\makebox(0,0)[lb]{\smash{{\SetFigFont{10}{12.0}{\rmdefault}{\mddefault}{\updefault}{$b$}%
}}}}
\put(2259,-12556){\makebox(0,0)[lb]{\smash{{\SetFigFont{10}{12.0}{\rmdefault}{\mddefault}{\updefault}{$cq$}%
}}}}
\put(2769,-12564){\makebox(0,0)[lb]{\smash{{\SetFigFont{10}{12.0}{\rmdefault}{\mddefault}{\updefault}{$dq$}%
}}}}
\put(3106,-12564){\makebox(0,0)[lb]{\smash{{\SetFigFont{10}{12.0}{\rmdefault}{\mddefault}{\updefault}{$cq^2$}%
}}}}
\put(3725,-12588){\makebox(0,0)[lb]{\smash{{\SetFigFont{10}{12.0}{\rmdefault}{\mddefault}{\updefault}{$dq^2$}%
}}}}
\put(4074,-12571){\makebox(0,0)[lb]{\smash{{\SetFigFont{10}{12.0}{\rmdefault}{\mddefault}{\updefault}{$cq^3$}%
}}}}
\put(4707,-12558){\makebox(0,0)[lb]{\smash{{\SetFigFont{10}{12.0}{\rmdefault}{\mddefault}{\updefault}{$dq^3$}%
}}}}
\put(1280,-10976){\makebox(0,0)[lb]{\smash{{\SetFigFont{10}{12.0}{\rmdefault}{\mddefault}{\updefault}{$a$}%
}}}}
\put(1947,-10976){\makebox(0,0)[lb]{\smash{{\SetFigFont{10}{12.0}{\rmdefault}{\mddefault}{\updefault}{$b$}%
}}}}
\put(2240,-10961){\makebox(0,0)[lb]{\smash{{\SetFigFont{10}{12.0}{\rmdefault}{\mddefault}{\updefault}{$a$}%
}}}}
\put(2885,-10991){\makebox(0,0)[lb]{\smash{{\SetFigFont{10}{12.0}{\rmdefault}{\mddefault}{\updefault}{$b$}%
}}}}
\put(3147,-10983){\makebox(0,0)[lb]{\smash{{\SetFigFont{10}{12.0}{\rmdefault}{\mddefault}{\updefault}{$a$}%
}}}}
\put(3837,-10983){\makebox(0,0)[lb]{\smash{{\SetFigFont{10}{12.0}{\rmdefault}{\mddefault}{\updefault}{$b$}%
}}}}
\put(4085,-10991){\makebox(0,0)[lb]{\smash{{\SetFigFont{10}{12.0}{\rmdefault}{\mddefault}{\updefault}{$a$}%
}}}}
\put(4805,-11006){\makebox(0,0)[lb]{\smash{{\SetFigFont{10}{12.0}{\rmdefault}{\mddefault}{\updefault}{$b$}%
}}}}
\put(1332,-9888){\makebox(0,0)[lb]{\smash{{\SetFigFont{10}{12.0}{\rmdefault}{\mddefault}{\updefault}{$a$}%
}}}}
\put(1880,-9903){\makebox(0,0)[lb]{\smash{{\SetFigFont{10}{12.0}{\rmdefault}{\mddefault}{\updefault}{$b$}%
}}}}
\put(2307,-9888){\makebox(0,0)[lb]{\smash{{\SetFigFont{10}{12.0}{\rmdefault}{\mddefault}{\updefault}{$a$}%
}}}}
\put(2795,-9926){\makebox(0,0)[lb]{\smash{{\SetFigFont{10}{12.0}{\rmdefault}{\mddefault}{\updefault}{$b$}%
}}}}
\put(3252,-9888){\makebox(0,0)[lb]{\smash{{\SetFigFont{10}{12.0}{\rmdefault}{\mddefault}{\updefault}{$a$}%
}}}}
\put(3770,-9926){\makebox(0,0)[lb]{\smash{{\SetFigFont{10}{12.0}{\rmdefault}{\mddefault}{\updefault}{$b$}%
}}}}
\put(4175,-9903){\makebox(0,0)[lb]{\smash{{\SetFigFont{10}{12.0}{\rmdefault}{\mddefault}{\updefault}{$a$}%
}}}}
\put(4737,-9948){\makebox(0,0)[lb]{\smash{{\SetFigFont{10}{12.0}{\rmdefault}{\mddefault}{\updefault}{$b$}%
}}}}
\put(1302,-11606){\makebox(0,0)[lb]{\smash{{\SetFigFont{10}{12.0}{\rmdefault}{\mddefault}{\updefault}{$cq$}%
}}}}
\put(1797,-11636){\makebox(0,0)[lb]{\smash{{\SetFigFont{10}{12.0}{\rmdefault}{\mddefault}{\updefault}{$dq$}%
}}}}
\put(2202,-11636){\makebox(0,0)[lb]{\smash{{\SetFigFont{10}{12.0}{\rmdefault}{\mddefault}{\updefault}{$cq^2$}%
}}}}
\put(2750,-11621){\makebox(0,0)[lb]{\smash{{\SetFigFont{10}{12.0}{\rmdefault}{\mddefault}{\updefault}{$dq^2$}%
}}}}
\put(3132,-11621){\makebox(0,0)[lb]{\smash{{\SetFigFont{10}{12.0}{\rmdefault}{\mddefault}{\updefault}{$cq^3$}%
}}}}
\put(3702,-11628){\makebox(0,0)[lb]{\smash{{\SetFigFont{10}{12.0}{\rmdefault}{\mddefault}{\updefault}{$dq^3$}%
}}}}
\put(1160,-10653){\makebox(0,0)[lb]{\smash{{\SetFigFont{10}{12.0}{\rmdefault}{\mddefault}{\updefault}{$cq^2$}%
}}}}
\put(1850,-10676){\makebox(0,0)[lb]{\smash{{\SetFigFont{10}{12.0}{\rmdefault}{\mddefault}{\updefault}{$dq^2$}%
}}}}
\put(2202,-10668){\makebox(0,0)[lb]{\smash{{\SetFigFont{10}{12.0}{\rmdefault}{\mddefault}{\updefault}{$cq^3$}%
}}}}
\put(2787,-10668){\makebox(0,0)[lb]{\smash{{\SetFigFont{10}{12.0}{\rmdefault}{\mddefault}{\updefault}{$dq^3$}%
}}}}
\put(3140,-10683){\makebox(0,0)[lb]{\smash{{\SetFigFont{10}{12.0}{\rmdefault}{\mddefault}{\updefault}{$cq^4$}%
}}}}
\put(3717,-10683){\makebox(0,0)[lb]{\smash{{\SetFigFont{10}{12.0}{\rmdefault}{\mddefault}{\updefault}{$dq^4$}%
}}}}
\put(4092,-10698){\makebox(0,0)[lb]{\smash{{\SetFigFont{10}{12.0}{\rmdefault}{\mddefault}{\updefault}{$cq^5$}%
}}}}
\put(4692,-10691){\makebox(0,0)[lb]{\smash{{\SetFigFont{10}{12.0}{\rmdefault}{\mddefault}{\updefault}{$dq^5$}%
}}}}
\put(4077,-11636){\makebox(0,0)[lb]{\smash{{\SetFigFont{10}{12.0}{\rmdefault}{\mddefault}{\updefault}{$cq^4$}%
}}}}
\put(4662,-11628){\makebox(0,0)[lb]{\smash{{\SetFigFont{10}{12.0}{\rmdefault}{\mddefault}{\updefault}{$dq^4$}%
}}}}
\put(8463,-12208){\makebox(0,0)[lb]{\smash{{\SetFigFont{10}{12.0}{\rmdefault}{\mddefault}{\updefault}{$aq$}%
}}}}
\put(9303,-12201){\makebox(0,0)[lb]{\smash{{\SetFigFont{10}{12.0}{\rmdefault}{\mddefault}{\updefault}{$b$}%
}}}}
\put(9490,-12208){\makebox(0,0)[lb]{\smash{{\SetFigFont{10}{12.0}{\rmdefault}{\mddefault}{\updefault}{$aq$}%
}}}}
\put(10233,-12208){\makebox(0,0)[lb]{\smash{{\SetFigFont{10}{12.0}{\rmdefault}{\mddefault}{\updefault}{$b$}%
}}}}
\put(10428,-12201){\makebox(0,0)[lb]{\smash{{\SetFigFont{10}{12.0}{\rmdefault}{\mddefault}{\updefault}{$aq$}%
}}}}
\put(11200,-12193){\makebox(0,0)[lb]{\smash{{\SetFigFont{10}{12.0}{\rmdefault}{\mddefault}{\updefault}{$b$}%
}}}}
\put(8583,-11218){\makebox(0,0)[lb]{\smash{{\SetFigFont{10}{12.0}{\rmdefault}{\mddefault}{\updefault}{$aq$}%
}}}}
\put(9280,-11241){\makebox(0,0)[lb]{\smash{{\SetFigFont{10}{12.0}{\rmdefault}{\mddefault}{\updefault}{$b$}%
}}}}
\put(9490,-11226){\makebox(0,0)[lb]{\smash{{\SetFigFont{10}{12.0}{\rmdefault}{\mddefault}{\updefault}{$aq$}%
}}}}
\put(10263,-11256){\makebox(0,0)[lb]{\smash{{\SetFigFont{10}{12.0}{\rmdefault}{\mddefault}{\updefault}{$b$}%
}}}}
\put(10443,-11248){\makebox(0,0)[lb]{\smash{{\SetFigFont{10}{12.0}{\rmdefault}{\mddefault}{\updefault}{$aq$}%
}}}}
\put(11223,-11263){\makebox(0,0)[lb]{\smash{{\SetFigFont{10}{12.0}{\rmdefault}{\mddefault}{\updefault}{$b$}%
}}}}
\put(8553,-11841){\makebox(0,0)[lb]{\smash{{\SetFigFont{10}{12.0}{\rmdefault}{\mddefault}{\updefault}{$cq$}%
}}}}
\put(9145,-11863){\makebox(0,0)[lb]{\smash{{\SetFigFont{10}{12.0}{\rmdefault}{\mddefault}{\updefault}{$dq$}%
}}}}
\put(9513,-11878){\makebox(0,0)[lb]{\smash{{\SetFigFont{10}{12.0}{\rmdefault}{\mddefault}{\updefault}{$cq^2$}%
}}}}
\put(10075,-11871){\makebox(0,0)[lb]{\smash{{\SetFigFont{10}{12.0}{\rmdefault}{\mddefault}{\updefault}{$dq^2$}%
}}}}
\put(10465,-11878){\makebox(0,0)[lb]{\smash{{\SetFigFont{10}{12.0}{\rmdefault}{\mddefault}{\updefault}{$cq^3$}%
}}}}
\put(11020,-11886){\makebox(0,0)[lb]{\smash{{\SetFigFont{10}{12.0}{\rmdefault}{\mddefault}{\updefault}{$dq^3$}%
}}}}
\put(8568,-10266){\makebox(0,0)[lb]{\smash{{\SetFigFont{10}{12.0}{\rmdefault}{\mddefault}{\updefault}{$aq$}%
}}}}
\put(9280,-10266){\makebox(0,0)[lb]{\smash{{\SetFigFont{10}{12.0}{\rmdefault}{\mddefault}{\updefault}{$b$}%
}}}}
\put(9513,-10258){\makebox(0,0)[lb]{\smash{{\SetFigFont{10}{12.0}{\rmdefault}{\mddefault}{\updefault}{$aq$}%
}}}}
\put(10218,-10273){\makebox(0,0)[lb]{\smash{{\SetFigFont{10}{12.0}{\rmdefault}{\mddefault}{\updefault}{$b$}%
}}}}
\put(10488,-10251){\makebox(0,0)[lb]{\smash{{\SetFigFont{10}{12.0}{\rmdefault}{\mddefault}{\updefault}{$aq$}%
}}}}
\put(11178,-10266){\makebox(0,0)[lb]{\smash{{\SetFigFont{10}{12.0}{\rmdefault}{\mddefault}{\updefault}{$b$}%
}}}}
\put(8515,-10888){\makebox(0,0)[lb]{\smash{{\SetFigFont{10}{12.0}{\rmdefault}{\mddefault}{\updefault}{$cq^2$}%
}}}}
\put(9160,-10911){\makebox(0,0)[lb]{\smash{{\SetFigFont{10}{12.0}{\rmdefault}{\mddefault}{\updefault}{$dq^2$}%
}}}}
\put(9498,-10918){\makebox(0,0)[lb]{\smash{{\SetFigFont{10}{12.0}{\rmdefault}{\mddefault}{\updefault}{$cq^3$}%
}}}}
\put(10098,-10911){\makebox(0,0)[lb]{\smash{{\SetFigFont{10}{12.0}{\rmdefault}{\mddefault}{\updefault}{$dq^3$}%
}}}}
\put(10450,-10926){\makebox(0,0)[lb]{\smash{{\SetFigFont{10}{12.0}{\rmdefault}{\mddefault}{\updefault}{$cq^4$}%
}}}}
\put(11035,-10911){\makebox(0,0)[lb]{\smash{{\SetFigFont{10}{12.0}{\rmdefault}{\mddefault}{\updefault}{$dq^4$}%
}}}}
\put(5053,-11928){\makebox(0,0)[lb]{\smash{{\SetFigFont{10}{12.0}{\rmdefault}{\mddefault}{\updefault}{$a$}%
}}}}
\put(5773,-11951){\makebox(0,0)[lb]{\smash{{\SetFigFont{10}{12.0}{\rmdefault}{\mddefault}{\updefault}{$b$}%
}}}}
\put(5031,-10991){\makebox(0,0)[lb]{\smash{{\SetFigFont{10}{12.0}{\rmdefault}{\mddefault}{\updefault}{$a$}%
}}}}
\put(5751,-11006){\makebox(0,0)[lb]{\smash{{\SetFigFont{10}{12.0}{\rmdefault}{\mddefault}{\updefault}{$b$}%
}}}}
\put(5121,-9903){\makebox(0,0)[lb]{\smash{{\SetFigFont{10}{12.0}{\rmdefault}{\mddefault}{\updefault}{$a$}%
}}}}
\put(5683,-9948){\makebox(0,0)[lb]{\smash{{\SetFigFont{10}{12.0}{\rmdefault}{\mddefault}{\updefault}{$b$}%
}}}}
\put(5049,-10702){\makebox(0,0)[lb]{\smash{{\SetFigFont{10}{12.0}{\rmdefault}{\mddefault}{\updefault}{$cq^6$}%
}}}}
\put(5687,-10679){\makebox(0,0)[lb]{\smash{{\SetFigFont{10}{12.0}{\rmdefault}{\mddefault}{\updefault}{$dq^6$}%
}}}}
\put(5004,-11625){\makebox(0,0)[lb]{\smash{{\SetFigFont{10}{12.0}{\rmdefault}{\mddefault}{\updefault}{$cq^5$}%
}}}}
\put(5702,-11617){\makebox(0,0)[lb]{\smash{{\SetFigFont{10}{12.0}{\rmdefault}{\mddefault}{\updefault}{$dq^5$}%
}}}}
\put(5004,-12541){\makebox(0,0)[lb]{\smash{{\SetFigFont{10}{12.0}{\rmdefault}{\mddefault}{\updefault}{$cq^4$}%
}}}}
\put(5590,-12560){\makebox(0,0)[lb]{\smash{{\SetFigFont{10}{12.0}{\rmdefault}{\mddefault}{\updefault}{$dq^4$}%
}}}}
\put(11389,-12194){\makebox(0,0)[lb]{\smash{{\SetFigFont{10}{12.0}{\rmdefault}{\mddefault}{\updefault}{$aq$}%
}}}}
\put(12161,-12186){\makebox(0,0)[lb]{\smash{{\SetFigFont{10}{12.0}{\rmdefault}{\mddefault}{\updefault}{$b$}%
}}}}
\put(11404,-11241){\makebox(0,0)[lb]{\smash{{\SetFigFont{10}{12.0}{\rmdefault}{\mddefault}{\updefault}{$aq$}%
}}}}
\put(12184,-11256){\makebox(0,0)[lb]{\smash{{\SetFigFont{10}{12.0}{\rmdefault}{\mddefault}{\updefault}{$b$}%
}}}}
\put(11449,-10244){\makebox(0,0)[lb]{\smash{{\SetFigFont{10}{12.0}{\rmdefault}{\mddefault}{\updefault}{$aq$}%
}}}}
\put(12139,-10259){\makebox(0,0)[lb]{\smash{{\SetFigFont{10}{12.0}{\rmdefault}{\mddefault}{\updefault}{$b$}%
}}}}
\put(11422,-11870){\makebox(0,0)[lb]{\smash{{\SetFigFont{10}{12.0}{\rmdefault}{\mddefault}{\updefault}{$cq^4$}%
}}}}
\put(12007,-11847){\makebox(0,0)[lb]{\smash{{\SetFigFont{10}{12.0}{\rmdefault}{\mddefault}{\updefault}{$dq^4$}%
}}}}
\put(11407,-10917){\makebox(0,0)[lb]{\smash{{\SetFigFont{10}{12.0}{\rmdefault}{\mddefault}{\updefault}{$cq^5$}%
}}}}
\put(12029,-10910){\makebox(0,0)[lb]{\smash{{\SetFigFont{10}{12.0}{\rmdefault}{\mddefault}{\updefault}{$dq^5$}%
}}}}
\put(1848,-16265){\makebox(0,0)[lb]{\smash{{\SetFigFont{10}{12.0}{\rmdefault}{\mddefault}{\updefault}{$aq$}%
}}}}
\put(2688,-16258){\makebox(0,0)[lb]{\smash{{\SetFigFont{10}{12.0}{\rmdefault}{\mddefault}{\updefault}{$b$}%
}}}}
\put(2875,-16265){\makebox(0,0)[lb]{\smash{{\SetFigFont{10}{12.0}{\rmdefault}{\mddefault}{\updefault}{$aq$}%
}}}}
\put(3618,-16265){\makebox(0,0)[lb]{\smash{{\SetFigFont{10}{12.0}{\rmdefault}{\mddefault}{\updefault}{$b$}%
}}}}
\put(3813,-16258){\makebox(0,0)[lb]{\smash{{\SetFigFont{10}{12.0}{\rmdefault}{\mddefault}{\updefault}{$aq$}%
}}}}
\put(4585,-16250){\makebox(0,0)[lb]{\smash{{\SetFigFont{10}{12.0}{\rmdefault}{\mddefault}{\updefault}{$b$}%
}}}}
\put(4774,-16251){\makebox(0,0)[lb]{\smash{{\SetFigFont{10}{12.0}{\rmdefault}{\mddefault}{\updefault}{$aq$}%
}}}}
\put(5546,-16243){\makebox(0,0)[lb]{\smash{{\SetFigFont{10}{12.0}{\rmdefault}{\mddefault}{\updefault}{$b$}%
}}}}
\put(2322,-15540){\makebox(0,0)[lb]{\smash{{\SetFigFont{10}{12.0}{\rmdefault}{\mddefault}{\updefault}{$aq^2$}%
}}}}
\put(3139,-15548){\makebox(0,0)[lb]{\smash{{\SetFigFont{10}{12.0}{\rmdefault}{\mddefault}{\updefault}{$b$}%
}}}}
\put(3334,-15525){\makebox(0,0)[lb]{\smash{{\SetFigFont{10}{12.0}{\rmdefault}{\mddefault}{\updefault}{$aq^2$}%
}}}}
\put(4092,-15525){\makebox(0,0)[lb]{\smash{{\SetFigFont{10}{12.0}{\rmdefault}{\mddefault}{\updefault}{$b$}%
}}}}
\put(5037,-15525){\makebox(0,0)[lb]{\smash{{\SetFigFont{10}{12.0}{\rmdefault}{\mddefault}{\updefault}{$b$}%
}}}}
\put(4279,-15525){\makebox(0,0)[lb]{\smash{{\SetFigFont{10}{12.0}{\rmdefault}{\mddefault}{\updefault}{$aq^2$}%
}}}}
\put(2329,-15158){\makebox(0,0)[lb]{\smash{{\SetFigFont{10}{12.0}{\rmdefault}{\mddefault}{\updefault}{$cq^2$}%
}}}}
\put(3030,-15202){\makebox(0,0)[lb]{\smash{{\SetFigFont{10}{12.0}{\rmdefault}{\mddefault}{\updefault}{$dq^2$}%
}}}}
\put(3319,-15188){\makebox(0,0)[lb]{\smash{{\SetFigFont{10}{12.0}{\rmdefault}{\mddefault}{\updefault}{$cq^3$}%
}}}}
\put(3968,-15202){\makebox(0,0)[lb]{\smash{{\SetFigFont{10}{12.0}{\rmdefault}{\mddefault}{\updefault}{$dq^3$}%
}}}}
\put(4264,-15195){\makebox(0,0)[lb]{\smash{{\SetFigFont{10}{12.0}{\rmdefault}{\mddefault}{\updefault}{$cq^4$}%
}}}}
\put(4905,-15202){\makebox(0,0)[lb]{\smash{{\SetFigFont{10}{12.0}{\rmdefault}{\mddefault}{\updefault}{$dq^4$}%
}}}}
\put(2374,-14520){\makebox(0,0)[lb]{\smash{{\SetFigFont{10}{12.0}{\rmdefault}{\mddefault}{\updefault}{$aq^2$}%
}}}}
\put(3049,-14520){\makebox(0,0)[lb]{\smash{{\SetFigFont{10}{12.0}{\rmdefault}{\mddefault}{\updefault}{$b$}%
}}}}
\put(3334,-14490){\makebox(0,0)[lb]{\smash{{\SetFigFont{10}{12.0}{\rmdefault}{\mddefault}{\updefault}{$aq^2$}%
}}}}
\put(4047,-14513){\makebox(0,0)[lb]{\smash{{\SetFigFont{10}{12.0}{\rmdefault}{\mddefault}{\updefault}{$b$}%
}}}}
\put(4984,-14483){\makebox(0,0)[lb]{\smash{{\SetFigFont{10}{12.0}{\rmdefault}{\mddefault}{\updefault}{$b$}%
}}}}
\put(4287,-14490){\makebox(0,0)[lb]{\smash{{\SetFigFont{10}{12.0}{\rmdefault}{\mddefault}{\updefault}{$aq^2$}%
}}}}
\put(7997,-16246){\makebox(0,0)[lb]{\smash{{\SetFigFont{10}{12.0}{\rmdefault}{\mddefault}{\updefault}{$aq$}%
}}}}
\put(8843,-16238){\makebox(0,0)[lb]{\smash{{\SetFigFont{10}{12.0}{\rmdefault}{\mddefault}{\updefault}{$b$}%
}}}}
\put(9030,-16245){\makebox(0,0)[lb]{\smash{{\SetFigFont{10}{12.0}{\rmdefault}{\mddefault}{\updefault}{$aq$}%
}}}}
\put(9773,-16245){\makebox(0,0)[lb]{\smash{{\SetFigFont{10}{12.0}{\rmdefault}{\mddefault}{\updefault}{$b$}%
}}}}
\put(9968,-16238){\makebox(0,0)[lb]{\smash{{\SetFigFont{10}{12.0}{\rmdefault}{\mddefault}{\updefault}{$aq$}%
}}}}
\put(10740,-16230){\makebox(0,0)[lb]{\smash{{\SetFigFont{10}{12.0}{\rmdefault}{\mddefault}{\updefault}{$b$}%
}}}}
\put(10929,-16231){\makebox(0,0)[lb]{\smash{{\SetFigFont{10}{12.0}{\rmdefault}{\mddefault}{\updefault}{$aq$}%
}}}}
\put(11701,-16223){\makebox(0,0)[lb]{\smash{{\SetFigFont{10}{12.0}{\rmdefault}{\mddefault}{\updefault}{$b$}%
}}}}
\put(8486,-15494){\makebox(0,0)[lb]{\smash{{\SetFigFont{10}{12.0}{\rmdefault}{\mddefault}{\updefault}{$aq^2$}%
}}}}
\put(9273,-15494){\makebox(0,0)[lb]{\smash{{\SetFigFont{10}{12.0}{\rmdefault}{\mddefault}{\updefault}{$b$}%
}}}}
\put(9461,-15479){\makebox(0,0)[lb]{\smash{{\SetFigFont{10}{12.0}{\rmdefault}{\mddefault}{\updefault}{$aq^2$}%
}}}}
\put(10248,-15494){\makebox(0,0)[lb]{\smash{{\SetFigFont{10}{12.0}{\rmdefault}{\mddefault}{\updefault}{$b$}%
}}}}
\put(11208,-15486){\makebox(0,0)[lb]{\smash{{\SetFigFont{10}{12.0}{\rmdefault}{\mddefault}{\updefault}{$b$}%
}}}}
\put(10436,-15486){\makebox(0,0)[lb]{\smash{{\SetFigFont{10}{12.0}{\rmdefault}{\mddefault}{\updefault}{$aq^2$}%
}}}}
\put(8996,-14774){\makebox(0,0)[lb]{\smash{{\SetFigFont{10}{12.0}{\rmdefault}{\mddefault}{\updefault}{$aq^3$}%
}}}}
\put(9708,-14766){\makebox(0,0)[lb]{\smash{{\SetFigFont{10}{12.0}{\rmdefault}{\mddefault}{\updefault}{$b$}%
}}}}
\put(9978,-14751){\makebox(0,0)[lb]{\smash{{\SetFigFont{10}{12.0}{\rmdefault}{\mddefault}{\updefault}{$aq^3$}%
}}}}
\put(10721,-14766){\makebox(0,0)[lb]{\smash{{\SetFigFont{10}{12.0}{\rmdefault}{\mddefault}{\updefault}{$b$}%
}}}}
\put(4927,-19106){\makebox(0,0)[lb]{\smash{{\SetFigFont{10}{12.0}{\rmdefault}{\mddefault}{\updefault}{$aq$}%
}}}}
\put(7664,-19091){\makebox(0,0)[lb]{\smash{{\SetFigFont{10}{12.0}{\rmdefault}{\mddefault}{\updefault}{$b$}%
}}}}
\put(7853,-19092){\makebox(0,0)[lb]{\smash{{\SetFigFont{10}{12.0}{\rmdefault}{\mddefault}{\updefault}{$aq$}%
}}}}
\put(5410,-18355){\makebox(0,0)[lb]{\smash{{\SetFigFont{10}{12.0}{\rmdefault}{\mddefault}{\updefault}{$aq^2$}%
}}}}
\put(6197,-18355){\makebox(0,0)[lb]{\smash{{\SetFigFont{10}{12.0}{\rmdefault}{\mddefault}{\updefault}{$b$}%
}}}}
\put(6385,-18340){\makebox(0,0)[lb]{\smash{{\SetFigFont{10}{12.0}{\rmdefault}{\mddefault}{\updefault}{$aq^2$}%
}}}}
\put(7172,-18355){\makebox(0,0)[lb]{\smash{{\SetFigFont{10}{12.0}{\rmdefault}{\mddefault}{\updefault}{$b$}%
}}}}
\put(8132,-18347){\makebox(0,0)[lb]{\smash{{\SetFigFont{10}{12.0}{\rmdefault}{\mddefault}{\updefault}{$b$}%
}}}}
\put(7360,-18347){\makebox(0,0)[lb]{\smash{{\SetFigFont{10}{12.0}{\rmdefault}{\mddefault}{\updefault}{$aq^2$}%
}}}}
\put(5920,-17635){\makebox(0,0)[lb]{\smash{{\SetFigFont{10}{12.0}{\rmdefault}{\mddefault}{\updefault}{$aq^3$}%
}}}}
\put(6632,-17627){\makebox(0,0)[lb]{\smash{{\SetFigFont{10}{12.0}{\rmdefault}{\mddefault}{\updefault}{$b$}%
}}}}
\put(6902,-17612){\makebox(0,0)[lb]{\smash{{\SetFigFont{10}{12.0}{\rmdefault}{\mddefault}{\updefault}{$aq^3$}%
}}}}
\put(7645,-17627){\makebox(0,0)[lb]{\smash{{\SetFigFont{10}{12.0}{\rmdefault}{\mddefault}{\updefault}{$b$}%
}}}}
\end{picture}}
\caption{Transforming an Aztec rectangle graph into the dual graph of a semi-hexagon.}
\label{holeytransform2}
\end{figure}

Let $T$ be any lozenge tiling of $\mathcal{SH}_{m,n-m}(s_1,\dotsc,s_m)$. Encode $T$ as a family of $n-m$ disjoint rhombi-paths connecting the top and the bottom of the region as in Figures \ref{rhombuspath}(a) and (b). This implies that
 \begin{equation}
  \M(\widetilde{G})=\sum_{\mathbf{P}=(P_1,\dotsc,P_{n-m})}\prod_{i=1}^{n-m}\wt(P_i),
 \end{equation}
 where the sum is taken over all families of disjoint rhombi-paths $\mathbf{P}=(P_1,P_2,\dotsc,P_{n-m})$ connecting the top and the bottom of the region, and where $\wt(P_i)$ is the product of weights of all rhombi in $P_i$.

 \begin{figure}\centering
\resizebox{!}{10cm}{\begin{picture}(0,0)%
\includegraphics{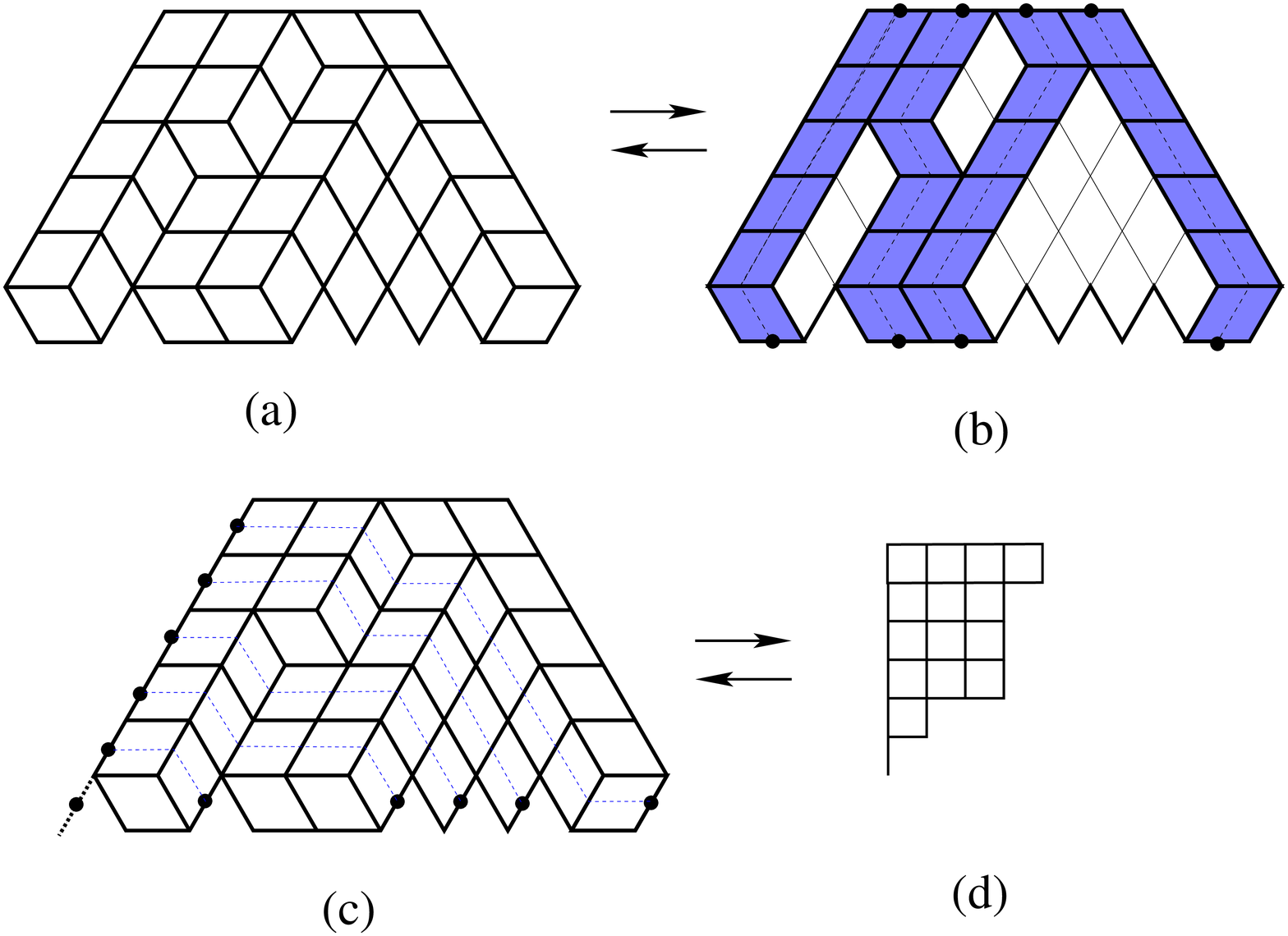}%
\end{picture}%
%
%
\setlength{\unitlength}{3947sp}%
\begingroup\makeatletter\ifx\SetFigFont\undefined%
\gdef\SetFigFont#1#2#3#4#5{%
  \reset@font\fontsize{#1}{#2pt}%
  \fontfamily{#3}\fontseries{#4}\fontshape{#5}%
  \selectfont}%
\fi\endgroup%
\begin{picture}(15709,11848)(346,-11428)
\put(11356,-6946){\makebox(0,0)[lb]{\smash{{\SetFigFont{20}{24.0}{\rmdefault}{\mddefault}{\updefault}{\color[rgb]{0,0,0}6}%
}}}}
\put(11356,-7891){\makebox(0,0)[lb]{\smash{{\SetFigFont{20}{24.0}{\rmdefault}{\mddefault}{\updefault}{\color[rgb]{0,0,0}4}%
}}}}
\put(5034,-7779){\makebox(0,0)[lb]{\smash{{\SetFigFont{20}{24.0}{\rmdefault}{\mddefault}{\updefault}{\color[rgb]{1,0,0}$q^4$}%
}}}}
\put(12286,-7441){\makebox(0,0)[lb]{\smash{{\SetFigFont{20}{24.0}{\rmdefault}{\mddefault}{\updefault}{\color[rgb]{0,0,0}4}%
}}}}
\put(11836,-7411){\makebox(0,0)[lb]{\smash{{\SetFigFont{20}{24.0}{\rmdefault}{\mddefault}{\updefault}{\color[rgb]{0,0,0}5}%
}}}}
\put(11356,-7396){\makebox(0,0)[lb]{\smash{{\SetFigFont{20}{24.0}{\rmdefault}{\mddefault}{\updefault}{\color[rgb]{0,0,0}5}%
}}}}
\put(12766,-6961){\makebox(0,0)[lb]{\smash{{\SetFigFont{20}{24.0}{\rmdefault}{\mddefault}{\updefault}{\color[rgb]{0,0,0}1}%
}}}}
\put(12271,-6931){\makebox(0,0)[lb]{\smash{{\SetFigFont{20}{24.0}{\rmdefault}{\mddefault}{\updefault}{\color[rgb]{0,0,0}5}%
}}}}
\put(11821,-6946){\makebox(0,0)[lb]{\smash{{\SetFigFont{20}{24.0}{\rmdefault}{\mddefault}{\updefault}{\color[rgb]{0,0,0}6}%
}}}}
\put(2581,-6346){\makebox(0,0)[lb]{\smash{{\SetFigFont{20}{24.0}{\rmdefault}{\mddefault}{\itdefault}{\color[rgb]{0,0,0}$Q_1$}%
}}}}
\put(2191,-7081){\makebox(0,0)[lb]{\smash{{\SetFigFont{20}{24.0}{\rmdefault}{\mddefault}{\itdefault}{\color[rgb]{0,0,0}$Q_2$}%
}}}}
\put(1756,-7771){\makebox(0,0)[lb]{\smash{{\SetFigFont{20}{24.0}{\rmdefault}{\mddefault}{\itdefault}{\color[rgb]{0,0,0}$Q_3$}%
}}}}
\put(1426,-8386){\makebox(0,0)[lb]{\smash{{\SetFigFont{20}{24.0}{\rmdefault}{\mddefault}{\itdefault}{\color[rgb]{0,0,0}$Q_4$}%
}}}}
\put(1081,-9031){\makebox(0,0)[lb]{\smash{{\SetFigFont{20}{24.0}{\rmdefault}{\mddefault}{\itdefault}{\color[rgb]{0,0,0}$Q_5$}%
}}}}
\put(691,-9676){\makebox(0,0)[lb]{\smash{{\SetFigFont{20}{24.0}{\rmdefault}{\mddefault}{\itdefault}{\color[rgb]{0,0,0}$Q_6$}%
}}}}
\put(11191,134){\makebox(0,0)[lb]{\smash{{\SetFigFont{20}{24.0}{\rmdefault}{\bfdefault}{\updefault}{\color[rgb]{0,0,0}$P_1$}%
}}}}
\put(11941,134){\makebox(0,0)[lb]{\smash{{\SetFigFont{20}{24.0}{\rmdefault}{\bfdefault}{\updefault}{\color[rgb]{0,0,0}$P_2$}%
}}}}
\put(12721,119){\makebox(0,0)[lb]{\smash{{\SetFigFont{20}{24.0}{\rmdefault}{\bfdefault}{\updefault}{\color[rgb]{0,0,0}$P_3$}%
}}}}
\put(13426,149){\makebox(0,0)[lb]{\smash{{\SetFigFont{20}{24.0}{\rmdefault}{\bfdefault}{\updefault}{\color[rgb]{0,0,0}$P_4$}%
}}}}
\put(7846,-9781){\makebox(0,0)[lb]{\smash{{\SetFigFont{20}{24.0}{\rmdefault}{\mddefault}{\updefault}{\color[rgb]{1,0,0}$q$}%
}}}}
\put(4291,-9136){\makebox(0,0)[lb]{\smash{{\SetFigFont{20}{24.0}{\rmdefault}{\mddefault}{\updefault}{\color[rgb]{1,0,0}$q^2$}%
}}}}
\put(3631,-9136){\makebox(0,0)[lb]{\smash{{\SetFigFont{20}{24.0}{\rmdefault}{\mddefault}{\updefault}{\color[rgb]{1,0,0}$q^2$}%
}}}}
\put(1966,-9121){\makebox(0,0)[lb]{\smash{{\SetFigFont{20}{24.0}{\rmdefault}{\mddefault}{\updefault}{\color[rgb]{1,0,0}$q^2$}%
}}}}
\put(2341,-8371){\makebox(0,0)[lb]{\smash{{\SetFigFont{20}{24.0}{\rmdefault}{\mddefault}{\updefault}{\color[rgb]{1,0,0}$q^3$}%
}}}}
\put(3871,-8416){\makebox(0,0)[lb]{\smash{{\SetFigFont{20}{24.0}{\rmdefault}{\mddefault}{\updefault}{\color[rgb]{1,0,0}$q^3$}%
}}}}
\put(4726,-8431){\makebox(0,0)[lb]{\smash{{\SetFigFont{20}{24.0}{\rmdefault}{\mddefault}{\updefault}{\color[rgb]{1,0,0}$q^3$}%
}}}}
\put(2746,-7741){\makebox(0,0)[lb]{\smash{{\SetFigFont{20}{24.0}{\rmdefault}{\mddefault}{\updefault}{\color[rgb]{1,0,0}$q^4$}%
}}}}
\put(3151,-7051){\makebox(0,0)[lb]{\smash{{\SetFigFont{20}{24.0}{\rmdefault}{\mddefault}{\updefault}{\color[rgb]{1,0,0}$q^5$}%
}}}}
\put(3976,-7036){\makebox(0,0)[lb]{\smash{{\SetFigFont{20}{24.0}{\rmdefault}{\mddefault}{\updefault}{\color[rgb]{1,0,0}$q^5$}%
}}}}
\put(5506,-7111){\makebox(0,0)[lb]{\smash{{\SetFigFont{20}{24.0}{\rmdefault}{\mddefault}{\updefault}{\color[rgb]{1,0,0}$q^5$}%
}}}}
\put(4306,-6376){\makebox(0,0)[lb]{\smash{{\SetFigFont{20}{24.0}{\rmdefault}{\mddefault}{\updefault}{\color[rgb]{1,0,0}$q^6$}%
}}}}
\put(3616,-6361){\makebox(0,0)[lb]{\smash{{\SetFigFont{20}{24.0}{\rmdefault}{\mddefault}{\updefault}{\color[rgb]{1,0,0}$q^6$}%
}}}}
\put(11326,-8851){\makebox(0,0)[lb]{\smash{{\SetFigFont{20}{24.0}{\rmdefault}{\mddefault}{\updefault}{\color[rgb]{0,0,0}2}%
}}}}
\put(12301,-8356){\makebox(0,0)[lb]{\smash{{\SetFigFont{20}{24.0}{\rmdefault}{\mddefault}{\updefault}{\color[rgb]{0,0,0}2}%
}}}}
\put(11821,-8371){\makebox(0,0)[lb]{\smash{{\SetFigFont{20}{24.0}{\rmdefault}{\mddefault}{\updefault}{\color[rgb]{0,0,0}2}%
}}}}
\put(11356,-8356){\makebox(0,0)[lb]{\smash{{\SetFigFont{20}{24.0}{\rmdefault}{\mddefault}{\updefault}{\color[rgb]{0,0,0}3}%
}}}}
\put(12316,-7891){\makebox(0,0)[lb]{\smash{{\SetFigFont{20}{24.0}{\rmdefault}{\mddefault}{\updefault}{\color[rgb]{0,0,0}3}%
}}}}
\put(11821,-7906){\makebox(0,0)[lb]{\smash{{\SetFigFont{20}{24.0}{\rmdefault}{\mddefault}{\updefault}{\color[rgb]{0,0,0}3}%
}}}}
\end{picture}}
\caption{(a) and (b). Bijection between tilings of a semihexagon $\mathcal{SH}_{6,4}(1,3,6,7,8,10)$ and families of disjoint rhombi-paths. (c) and (d). Bijection between tilings of a semihexagon $\mathcal{SH}_{6,4}(1,3,6,7,8,10)$ and column-strict plane partitions of shape $(10-6,8-5,7-4,6-3,3-2,1-1)$ with positive entries at most $6$.}
\label{rhombuspath}
\end{figure}

 Next, we change the weights of the rhombi as follows. Reassign each right rhombus a weight $1$, and divide the weight of each left rhombus by  $a$. Denote by $\wt'$ the new weight function. We have $\wt(P_i)=b^{m-s_i+i}a^{s_i-i}\wt'(P_i)$ (since each rhombi-path $P_i$ has exactly $s_i-i$ left rhombi and $m-s_i+i$ right rhombi). Denote by $\overline{G}$ the resulting weighted version of $\widetilde{G}$, and $\overline{\mathcal{SH}}$ the corresponding weighted version of $\mathcal{SH}_{m,n-m}(s_1,\dotsc,s_m)$. We get
  \begin{align}\label{Gp2}
  \M(\widetilde{G})&=b^{\sum_{i=1}^{n-m}(m-s_i+i)}a^{\sum_{i=1}^{n-m}(s_i-i)}\sum_{\mathbf{P}
  =(P_1,\dotsc,P_{n-m})}\prod_{i=1}^{n-m}\wt'(P_i)\notag\\
  &=b^{m(n-m)-\sum_{i=1}^{m}(s_i-i)}a^{\sum_{i=1}^{m}(s_i-i)} \M(\overline{G}).
 \end{align}
Now, all left and vertical rhombi in $\overline{\mathcal{SH}}$ are weighted by $1$, and each right rhombus on level $k$ is weighted by $q^{k+1}$ (see Figure \ref{rhombuspath}(c)).

We now encode each lozenge tiling $T$ of $\overline{\mathcal{SH}}$ as a $m$-tuple of (new) disjoint rhombi-paths $(Q_1,\dotsc,Q_{m})$ connecting the northwest side and the left sides of the dents (illustrated in Figure \ref{rhombuspath}(c)).  Some of the $Q_i$ paths may be empty (when $s_i=i$). The exponents of $q$ along the path $Q_i$ gives the entries of the $i$-th row of a \emph{column-strict plane partition} of shape $(s_m-m, s_{m-1}-m+1, \dotsc , s_1-1)$ with positive entries at most $m$ (see Figure \ref{rhombuspath}(d)). We note that path $Q_6$ in Figure \ref{rhombuspath}(c) is empty; and the vertical interval at the bottom of the plane partition in Figure \ref{rhombuspath}(d) presents a row of length $0$. It is easy to verify that the above correspondence yields a bijection. Moreover, the weight of the tiling $T$ of $\overline{\mathcal{SH}}$ is exactly $q^{|\pi_T|}$, where $\pi_T$ is the column-strict plane partition corresponding  to $T$ and where $|\pi_T|$ is the sum of all entries of $\pi_T$. 

Summing over all tilings $T$ of $\overline{\mathcal{SH}}$, we have
\begin{equation}\label{Gp}
\M(\overline{G})=\M(\overline{\mathcal{SH}})=\sum_{\pi}q^{|\pi|}=q^{\sum_{i=1}^{m}(s_i-i)}\prod_{1\leq i<j\leq m}\frac{q^{s_j}-q^{s_i}}{q^{j}-q^{i}},
\end{equation}
where the sum after the first equal sign is taken over all column-strict plane partitions $\pi$ of shape $(s_m-m, s_{m-1}-m+1, \dotsc, s_1-1)$ with positive entries at most $m$; for the second equal sign see e.g. \cite{Stanley}, pp. 375. Then the theorem follows from (\ref{masstransform}), (\ref{Gp2}) and (\ref{Gp}).
\end{proof}

\begin{rmk}
For $1\leq s_1<s_2<\dotsc<s_m\leq n$, we have the following identity
\begin{align}\label{relation}
\M(\mathcal{AR}_{m,n}(s_1,s_2,\dotsc,s_{m})=2^{\frac{m(m+1)}{2}}\M(\mathcal{SH}_{m,n-m}(s_1,s_2,\dotsc,s_{m})),
\end{align}
where the numbers of tilings on both sides are equal to $2^{\frac{m(m+1)}{2}}\prod_{i<j}\frac{s_j-s_i}{j-i}$ (see e.g.  Lemma 3 in \cite{Gessel} and Proposition 2.1 in \cite{Cohn}).  This gives an interesting connection between two \emph{different} types of tilings: domino tiling on the left-hand side and lozenge tiling on the right-hand side. By letting $a=b=c=d=q=1$, Figure \ref{holeytransform2} gives a (simple) combinatorial explanation for the relation (\ref{relation}).  Moreover, our explanation is direct in the sense that it does not require any explicit enumeration of tilings of the two regions.
\end{rmk}

\section{Proof of Theorem \ref{main}}

A \textit{Schr\"{o}der path} is a lattice path on $\mathbb{Z}^2$, starting and ending on the $x$-axis, never going below the $x$-axis, using $(1, 1)$, $(1,-1)$ and $(2,0)$ steps
 (i.e. up, down and  level steps, respectively). See Figure \ref{schrodernobarrier} for a Schr\"{o}der path.

\begin{figure}\centering
\resizebox{!}{3cm}{
\begin{picture}(0,0)%
\includegraphics{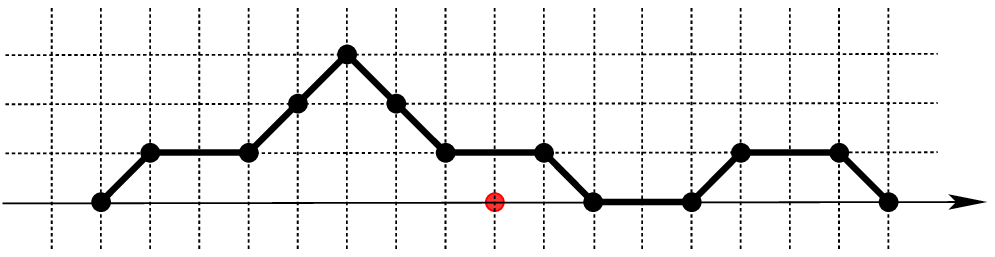}%
\end{picture}%
\setlength{\unitlength}{3947sp}%
\begingroup\makeatletter\ifx\SetFigFont\undefined%
\gdef\SetFigFont#1#2#3#4#5{%
  \reset@font\fontsize{#1}{#2pt}%
  \fontfamily{#3}\fontseries{#4}\fontshape{#5}%
  \selectfont}%
\fi\endgroup%
\begin{picture}(4748,1385)(1643,-4550)
\put(3980,-4535){\makebox(0,0)[lb]{\smash{{\SetFigFont{8}{12.0}{\rmdefault}{\mddefault}{\updefault}{$0$}%
}}}}
\put(4194,-4535){\makebox(0,0)[lb]{\smash{{\SetFigFont{8}{12.0}{\rmdefault}{\mddefault}{\updefault}{$1$}%
}}}}
\put(4430,-4535){\makebox(0,0)[lb]{\smash{{\SetFigFont{8}{12.0}{\rmdefault}{\mddefault}{\updefault}{$2$}%
}}}}
\put(4666,-4535){\makebox(0,0)[lb]{\smash{{\SetFigFont{8}{12.0}{\rmdefault}{\mddefault}{\updefault}{$3$}%
}}}}
\put(4903,-4535){\makebox(0,0)[lb]{\smash{{\SetFigFont{8}{12.0}{\rmdefault}{\mddefault}{\updefault}{$4$}%
}}}}
\put(5139,-4535){\makebox(0,0)[lb]{\smash{{\SetFigFont{8}{12.0}{\rmdefault}{\mddefault}{\updefault}{$5$}%
}}}}
\put(5375,-4535){\makebox(0,0)[lb]{\smash{{\SetFigFont{8}{12.0}{\rmdefault}{\mddefault}{\updefault}{$6$}%
}}}}
\put(5611,-4535){\makebox(0,0)[lb]{\smash{{\SetFigFont{8}{12.0}{\rmdefault}{\mddefault}{\updefault}{$7$}%
}}}}
\put(5847,-4535){\makebox(0,0)[lb]{\smash{{\SetFigFont{8}{12.0}{\rmdefault}{\mddefault}{\updefault}{$8$}%
}}}}
\put(3722,-4535){\makebox(0,0)[lb]{\smash{{\SetFigFont{8}{12.0}{\rmdefault}{\mddefault}{\updefault}{$-1$}%
}}}}
\put(3485,-4535){\makebox(0,0)[lb]{\smash{{\SetFigFont{8}{12.0}{\rmdefault}{\mddefault}{\updefault}{$-2$}%
}}}}
\put(3249,-4535){\makebox(0,0)[lb]{\smash{{\SetFigFont{8}{12.0}{\rmdefault}{\mddefault}{\updefault}{$-3$}%
}}}}
\put(3013,-4535){\makebox(0,0)[lb]{\smash{{\SetFigFont{8}{12.0}{\rmdefault}{\mddefault}{\updefault}{$-4$}%
}}}}
\put(2777,-4535){\makebox(0,0)[lb]{\smash{{\SetFigFont{8}{12.0}{\rmdefault}{\mddefault}{\updefault}{$-5$}%
}}}}
\put(2540,-4535){\makebox(0,0)[lb]{\smash{{\SetFigFont{8}{12.0}{\rmdefault}{\mddefault}{\updefault}{$-6$}%
}}}}
\put(2304,-4535){\makebox(0,0)[lb]{\smash{{\SetFigFont{8}{12.0}{\rmdefault}{\mddefault}{\updefault}{$-7$}%
}}}}
\put(2068,-4535){\makebox(0,0)[lb]{\smash{{\SetFigFont{8}{12.0}{\rmdefault}{\mddefault}{\updefault}{$-8$}%
}}}}
\end{picture}}
\caption{A Schr\"{o}der path from $(-8,0)$ to $(8,0)$.}
\label{schrodernobarrier}
\end{figure}

Color the Aztec rectangle with holes $\mathcal{AR}_{m,n}(s_1,\dotsc,s_m)$ by black and white so that two adjacent unit squares have opposite color and  that the unit squares along the northwest side are white. Decorating the dominoes of the region as in Figure \ref{elementmove}, we have also a bijection between  the tilings of the region and families of non-intersecting (partial shifted) Schr\"{o}der paths $\textbf{P}=(P_1,P_2,\dotsc,P_m)$, where $P_i$ connects the the centers of $i$-th vertical steps on the southwest and the southeast boundaries of the region (see Figure \ref{Holeyschroder}). 

\begin{figure}\centering
\includegraphics{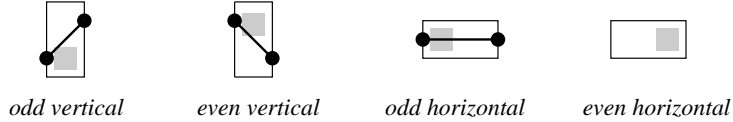}
\caption{Drawing the steps of the Schr\"{o}der paths.}
\label{elementmove}
\end{figure}

\begin{figure}\centering
\resizebox{!}{6.5cm}{
\begin{picture}(0,0)%
\includegraphics{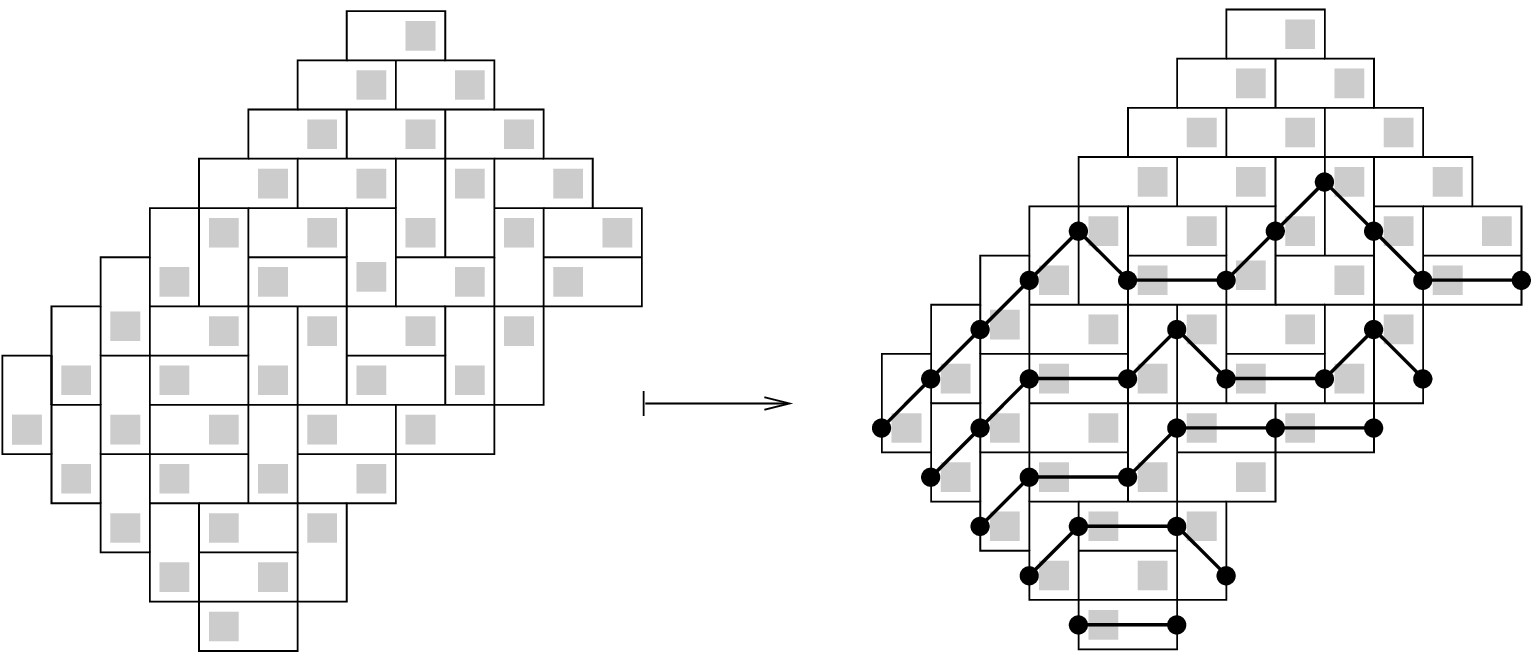}%
\end{picture}%
%
%
\setlength{\unitlength}{3947sp}%
\begingroup\makeatletter\ifx\SetFigFont\undefined%
\gdef\SetFigFont#1#2#3#4#5{%
  \reset@font\fontsize{#1}{#2pt}%
  \fontfamily{#3}\fontseries{#4}\fontshape{#5}%
  \selectfont}%
\fi\endgroup%
\begin{picture}(7356,3629)(1202,-4903)
\put(2174,-4830){\makebox(0,0)[lb]{\smash{{\SetFigFont{12}{14.4}{\rmdefault}{\mddefault}{\itdefault}{$T$}%
}}}}
\put(5671,-4830){\makebox(0,0)[lb]{\smash{{\SetFigFont{12}{14.4}{\rmdefault}{\mddefault}{\itdefault}{$P=(P_1,P_2,P_3,P_4,P_5)$}%
}}}}
\put(6084,-4358){\makebox(0,0)[lb]{\smash{{\SetFigFont{12}{14.4}{\rmdefault}{\mddefault}{\itdefault}{$P_1$}%
}}}}
\put(5847,-4122){\makebox(0,0)[lb]{\smash{{\SetFigFont{12}{14.4}{\rmdefault}{\mddefault}{\itdefault}{$P_2$}%
}}}}
\put(5611,-3885){\makebox(0,0)[lb]{\smash{{\SetFigFont{12}{14.4}{\rmdefault}{\mddefault}{\itdefault}{$P_3$}%
}}}}
\put(5375,-3649){\makebox(0,0)[lb]{\smash{{\SetFigFont{12}{14.4}{\rmdefault}{\mddefault}{\itdefault}{$P_4$}%
}}}}
\put(5080,-3413){\makebox(0,0)[lb]{\smash{{\SetFigFont{12}{14.4}{\rmdefault}{\mddefault}{\itdefault}{$P_5$}%
}}}}
\end{picture}}
\caption{Bijection between tilings of an Aztec rectangle with holes and families of non-intersection Schr\"{o}der paths.}
\label{Holeyschroder}
\end{figure}

 Assume that the endpoints of $P_1$ are on the $x$-axis. Denote by $\area(P_i)$ is the area underneath $P_i$ (i.e. the area restricted by $P_i$ and the $x$-axis), and define $\area(\textbf{P}):=\sum_{i=1}^{m}\area(P_i)$.  It is easy to see that the family of  Schr\"{o}der paths corresponding to the minimal tiling $T_0$ has the smallest (total) underneath area (see Figure \ref{mintiling}).

\begin{figure}\centering
\resizebox{!}{6.5cm}{
\begin{picture}(0,0)%
\includegraphics{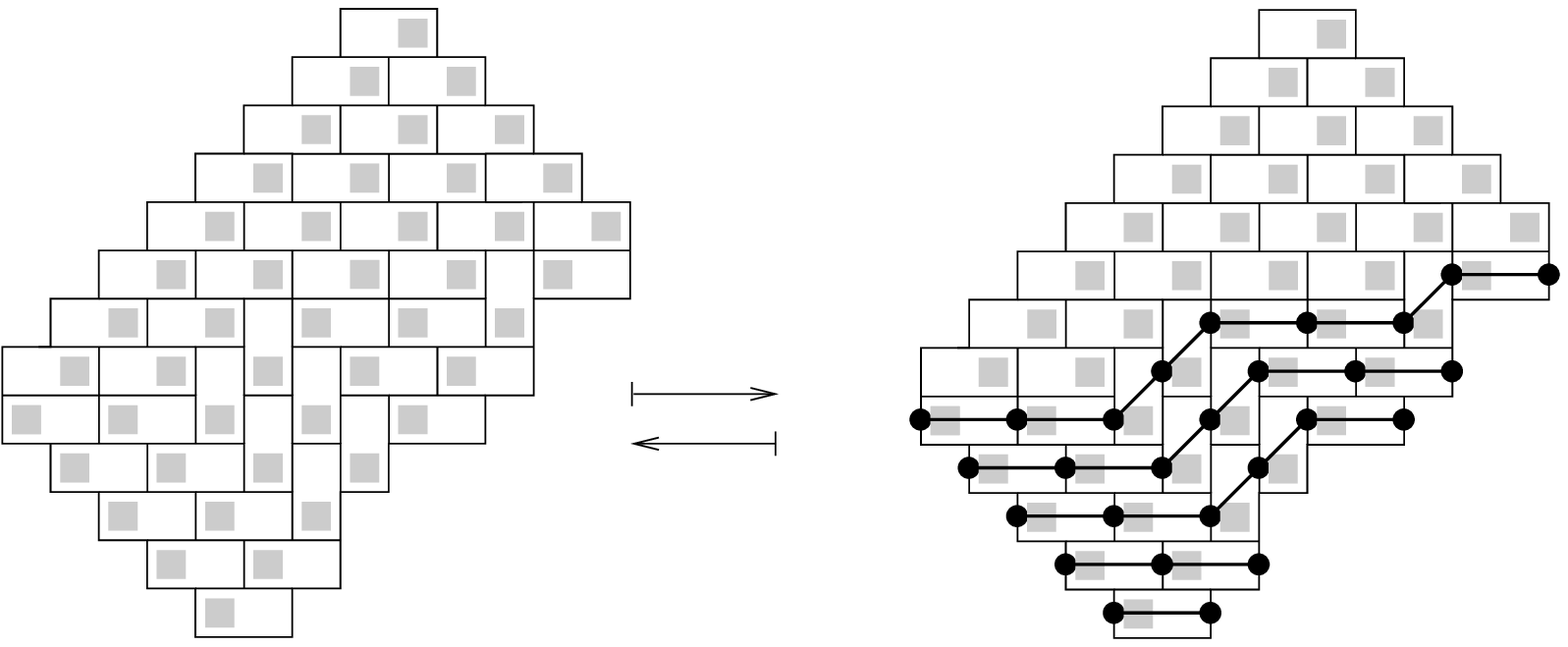}%
\end{picture}
\setlength{\unitlength}{3947sp}%
\begingroup\makeatletter\ifx\SetFigFont\undefined%
\gdef\SetFigFont#1#2#3#4#5{%
  \reset@font\fontsize{#1}{#2pt}%
  \fontfamily{#3}\fontseries{#4}\fontshape{#5}%
  \selectfont}%
\fi\endgroup%
\begin{picture}(7633,3530)(697,-3624)
\put(4810,-2229){\makebox(0,0)[lb]{\smash{{\SetFigFont{12}{14.4}{\rmdefault}{\mddefault}{\itdefault}{\color[rgb]{0,0,0}$P^*_5$}%
}}}}
\put(1966,-3519){\makebox(0,0)[lb]{\smash{{\SetFigFont{12}{14.4}{\rmdefault}{\mddefault}{\itdefault}{\color[rgb]{0,0,0}$T_0$}%
}}}}
\put(5994,-3549){\makebox(0,0)[lb]{\smash{{\SetFigFont{12}{14.4}{\rmdefault}{\mddefault}{\itdefault}{\color[rgb]{0,0,0}$\textbf{P}^*=(P^*_1,P^*_2,P^*_3,P^*_4,P^*_5)$}%
}}}}
\put(5800,-3204){\makebox(0,0)[lb]{\smash{{\SetFigFont{12}{14.4}{\rmdefault}{\mddefault}{\itdefault}{\color[rgb]{0,0,0}$P^*_1$}%
}}}}
\put(5550,-2971){\makebox(0,0)[lb]{\smash{{\SetFigFont{12}{14.4}{\rmdefault}{\mddefault}{\itdefault}{\color[rgb]{0,0,0}$P^*_2$}%
}}}}
\put(5329,-2716){\makebox(0,0)[lb]{\smash{{\SetFigFont{12}{14.4}{\rmdefault}{\mddefault}{\itdefault}{\color[rgb]{0,0,0}$P^*_3$}%
}}}}
\put(5089,-2491){\makebox(0,0)[lb]{\smash{{\SetFigFont{12}{14.4}{\rmdefault}{\mddefault}{\itdefault}{\color[rgb]{0,0,0}$P^*_4$}%
}}}}
\end{picture}}
\caption{Minimal tiling and its corresponding path family.}
\label{mintiling}
\end{figure}

We assign the dominoes in the Aztec rectangle with holes as follows\footnote{This weight assignment was introduced in \cite{Kamioka}.}.  Each even horizontal and odd vertical domino a weight $1$, each odd horizontal domino on level $k$ from the bottom of the region  a weight $tq^{2k}$, and each even vertical domino on $k$ a weight $q^{2k+1}$ (see Figure \ref{elementmove} for four types of dominoes). Similar to the case of rhombi-paths in the previous section, we define the weight $\wt(P_i)$ of the path $P_i$ to be the product of weights of all dominoes corresponding to the steps in $P_i$; and $\wt(\mathbf{P})=\prod_{i=1}^{m}\wt(P_i)$. The weight of each tiling of the region can be written as a product of the form $t^{x}q^{y}$. Denote by $\beta(\textbf{P})$ the exponent $y$ of $q$ in the weight $\wt(\textbf{P})=\wt(T)$. We denote by $\level(P_i)$, $\down(P_i)$ and $\up(P_i)$ the numbers of level, down and up steps in the path $P_i$, respectively. Define $\level(\textbf{P}):=\sum_{i=1}^{m} \level(P_i)$. 

\begin{lem}\label{relate}
Assume that $T$ is a tiling of the region $\mathcal{AR}_{m,n}(s_1,s_2,\dotsc,s_{m})$, and $\textbf{P}=(P_1,P_2,\dotsc,P_m)$ is  the family of non-intersecting (partial) Schr\"{o}der paths corresponding to $T$. Then
\begin{equation}\label{relate1}
v(T)+\level(\textbf{P})=\frac{m(m+1)}{2},
\end{equation}
and
\begin{equation}\label{relate2}
\beta(\textbf{P})-r(T)=\beta(\textbf{P}^*)=\sum_{1\leq j\leq m}2(s_{i}+j-i-1),
\end{equation}
where $\textbf{P}^*$ is the path family corresponding to the minimal tiling $T_0$.
\end{lem}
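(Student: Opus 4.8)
The plan is to prove the two identities by a common two-step strategy: first show that the quantity in question is unchanged by an elementary move, so that it is determined by its value on a single tiling, and then evaluate it explicitly on the minimal tiling $T_0$.

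\emph{Proof of (\ref{relate1}).} An elementary move replaces a $2\times 2$ block of two stacked horizontal dominoes by two side-by-side vertical dominoes (or the reverse); the two horizontal dominoes of the block have opposite parity, so in the step dictionary of Figure~\ref{elementmove} they together carry exactly one level step, while the two vertical dominoes carry one up-step and one down-step. Hence such a move changes the number of vertical dominoes by $\pm 2$ --- i.e.\ $v(T)$ by $\pm 1$ --- and changes $\level(\mathbf{P})$ by $\mp 1$, so that $v(T)+\level(\mathbf{P})$ is invariant under elementary moves. Since every tiling is joined to $T_0$ by a sequence of such moves, $v(T)+\level(\mathbf{P})$ is the same for all tilings of $\mathcal{AR}_{m,n}(s_1,\dots,s_m)$, and it suffices to evaluate it on $T_0$. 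Using the explicit description of $T_0$ --- a strip of $m-(h_i-i)$ vertical dominoes next to the hole at position $h_i$, all remaining dominoes horizontal --- one has $2v(T_0)=\sum_{i=1}^{n-m}\bigl(m-(h_i-i)\bigr)$, and counting the level steps of $\mathbf{P}^*$ among the remaining horizontal dominoes, a direct computation yields $v(T_0)+\level(\mathbf{P}^*)=\tfrac{m(m+1)}{2}$.

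\emph{Proof of (\ref{relate2}).} The region $\mathcal{AR}_{m,n}(s_1,\dots,s_m)$ is simply connected, since its holes lie on the boundary; so by the standard height-function/flip-graph theory its tilings form a graded poset with $T_0$ as the unique minimal element and with grading $\area$, whence $r(T)=\area(\mathbf{P})-\area(\mathbf{P}^*)$. Next I would verify that the weight assignment is arranged so that an elementary move changes $\beta(\mathbf{P})$ by exactly the same $\pm 1$ by which it changes $\area(\mathbf{P})$; this is a local check at the rotated $2\times 2$ block, comparing the $q$-exponents (namely $2k$ on the odd horizontal domino removed versus $2k+1$ on the even vertical domino created, the other two dominoes of the block carrying no power of $q$) in each of the two checkerboard positions of the block. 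It then follows that $\beta(\mathbf{P})-\area(\mathbf{P})$ is invariant under elementary moves, hence constant over all tilings, hence equal to $\beta(\mathbf{P}^*)-\area(\mathbf{P}^*)$; combining this with $r(T)=\area(\mathbf{P})-\area(\mathbf{P}^*)$ gives $\beta(\mathbf{P})-r(T)=\beta(\mathbf{P}^*)$.

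It remains only to compute $\beta(\mathbf{P}^*)$ from $T_0$ outright: summing the $q$-exponent $2\ell+1$ over the even vertical dominoes in each strip and the $q$-exponent $2k$ over the odd horizontal dominoes filling the rest of the region, and simplifying, one obtains $\sum_{1\le i\le j\le m}2(s_i+j-i-1)$ (the subscript in the stated formula should read $1\le i\le j\le m$, matching the exponent in Theorem~\ref{main}). I expect the main obstacles to be: (i) pinning down the sign in the local comparison $\Delta\beta=\Delta\area$ --- in particular fixing the convention for the ``level'' of a vertical domino so that the identity holds for \emph{every} elementary move and in both parities of the flipped block --- since all of (\ref{relate2}) rests on this; and (ii) carrying out the two closed-form evaluations at $T_0$ (of $v(T_0)+\level(\mathbf{P}^*)$ and of $\beta(\mathbf{P}^*)$), which are routine but require careful bookkeeping with the hole positions $h_i$ versus the non-hole positions $s_j$.
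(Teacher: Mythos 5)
Your reduction of both identities to the single tiling $T_0$ via flip-invariance is sound in outline, and for (\ref{relate2}) it is essentially the paper's own argument: the paper observes (Figure \ref{elementmove2}) that each elementary move raises $r(T)$ and $\beta(\mathbf{P})$ simultaneously by one, so $\beta(\mathbf{P})-r(T)$ is constant and equals its value at $T_0$; your detour through $\area(\mathbf{P})$ is an equivalent packaging of the same local check. The genuine problem is your base case for (\ref{relate1}). The vertical dominoes of $T_0$ are exactly the up-steps of $\mathbf{P}^*$ (the paper builds $P^*_{j+1}$ from $P^*_j$ by adding only up steps and one level step, so $\down(P^*_j)=0$ and $\up(P^*_j)=s_j-j$), and their total number $\sum_i\bigl(m-(h_i-i)\bigr)=\sum_j(s_j-j)$ is in general odd --- it equals $5$ for $\mathcal{AR}_{3,6}(1,4,6)$ --- so your $v(T_0)=\tfrac12\sum_i\bigl(m-(h_i-i)\bigr)$ need not even be an integer. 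Since $\level(\mathbf{P}^*)=\sum_{j=1}^m j=\tfrac{m(m+1)}{2}$ already exhausts the right-hand side of (\ref{relate1}), your evaluation would overshoot by $\tfrac12\sum_j(s_j-j)$. The identity only holds if $v(T)$ is read as the number of vertical dominoes of one fixed parity class, i.e.\ $v(T)=\sum_i\down(P_i)$, which agrees with half of all vertical dominoes precisely when $\up(P_i)=\down(P_i)$ for all $i$ (the Aztec diamond case); with that reading $v(T_0)=0$ and the base case is immediate. Your planned computation of $\beta(\mathbf{P}^*)$ by "summing $2\ell+1$ over the even vertical dominoes in each strip" is a second symptom of the same misreading: that sum runs over the empty set.

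Note also that the paper's proof of (\ref{relate1}) needs no flip-invariance at all: it completes each $P_i$ to a balanced shifted Schr\"oder path $P_i'$ by appending $s_i-i$ down steps, counts steps to get $\down(P_i)+\level(P_i)=i$ for an \emph{arbitrary} tiling, and sums over $i$. And the two closed-form evaluations you defer as routine are the real content of the second half of the lemma; the paper gets $\beta(\mathbf{P}^*)$ from the recursion $\wt(P^*_{j+1})=q^{2j}\,tq^{2(s_{j+1}-1)}\wt(P^*_j)$ with $\wt(P^*_1)=tq^{2(s_1-1)}$, yielding $\wt(\mathbf{P}^*)=t^{m(m+1)/2}q^{\sum_{1\leq i\leq j\leq m}2(s_i+j-i-1)}$. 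You are right that the subscript in (\ref{relate2}) should read $1\leq i\leq j\leq m$.
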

\begin{proof}
It is easy to see that
\begin{equation}\label{neweq1}
\up(P_i)-\down(P_i)=s_i-i.
\end{equation}
Thus, by adding $s_i-i$ down steps to the right of $P_i$, we have a shifted Schr\"{o}der path $P'_i$ connecting $(-i,i)$ and $(i+2(s_i-i),i)$. One readily sees that
\begin{equation}\label{neweq2}
\down(P'_i)=\up(P'_i)=\up(P_i),
\end{equation}
\begin{equation}\label{neweq3}
\level(P'_i)=\level(P_i),
\end{equation}
and
\begin{equation}\label{neweq4}
\down(P'_i)=\down(P_i)+(s_i-i).
\end{equation}
Moreover, we have also
\begin{equation}
\up(P'_i)+\down(P'_i)+2\level(P'_i)=2s_i,
\end{equation}
so by (\ref{neweq2}) 
\begin{equation}\label{neweq5}
\down(P'_i)+\level(P'_i)=s_i.
\end{equation}
Substituting (\ref{neweq3}) and (\ref{neweq4}) into (\ref{neweq5}), we have
\begin{equation}\label{relate3}
\down(P_i)+\level(P_i)=i,
\end{equation}
for any $i=1,2,\dots,m$. Adding  $m$ equalities in (\ref{relate3}), for $i=1,2,\dotsc,m$, we obtain (\ref{relate1}).

\begin{figure}\centering
\includegraphics{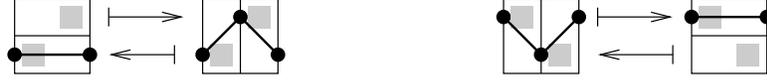}
\caption{The elementary moves rise the rank of the tiling $T$ by one (left-to-right, respectively) if only if the exponent of $q$ in $w(T)$ increases by one.}
\label{elementmove2}
\end{figure}

\medskip

Divide the set of elementary moves into to types as in Figure \ref{elementmove2}. We notice that the elementary moves (from left to right) increase simultaneously the rank $r(T)$ and the exponent $\beta(\mathbf{P})$ by one. This implies the first equality in (\ref{relate2}), since $r(T_0)=0$.

Consider the paths $P^*_j$ in the path family $\mathbf{P}^*$ corresponding to the minimal tiling $T_0$. The path $P^*_{j+1}$ can be obtained recursively from $P^*_{j}$ by adding $s_{j+1}-s_{j}$ up steps followed by a level step, and shifting the resulting path $\sqrt{2}$ units to northwest (see Figure \ref{mintiling}). 

We note that all up steps in $P^*_{j+1}$ have weight $1$. Moreover, the above shifting rises simultaneously the weights of all $j$ level steps in $P^{*}_{j}$ by a factor $q^2$; and the weight of the last level step in $P^{*}_{j+1}$ is $tq^{s_{j+1}-1}$. Thus, we get
\begin{equation}
\wt(P^*_{j+1})=q^{2j}tq^{2(s_{j+1}-1)}\wt(P^*_j),
\end{equation}
and it is easy to see that $\wt(P_1^*)=tq^{2(s_1-1)}$.
 Thus, by induction, we get 
 \begin{equation}
 \wt(P^*_j)=\prod_{i=1}^{j}tq^{2(s_{i}+j-i-1)},\end{equation}
 for $j=1,2,\dotsc,m$. Therefore,
\begin{equation}
\wt(T_0)=\wt(\textbf{P}^*)=t^{\frac{n(n+1)}{2}}q^{\sum_{1\leq i\leq j\leq m}2(s_{i}+j-i-1)},
\end{equation}
which implies the second equal sign in (\ref{relate2}).
\end{proof}

We are now ready to prove Theorem \ref{main}.

\begin{proof}[Proof of Theorem \ref{main}] Weighting the dominoes in our region as in Lemma \ref{relate}.
We use the shorthand notation $AR(t,q)$ for the weighted Aztec rectangle graph $AR_{m,n}\left(\wt_{t,q}^{1,1}(q^2)\right)$ in which all bottommost vertices, except for the ones at the positions $s_i$'s, have been removed. Thus, $AR(t,q)$ is simply the dual graph of our weighted region. By the above bijection between domino tilings and families of non-intersecting Sch\"{o}der paths, we obtain
\begin{equation}
\M(AR(t,q))=\sum_{\textbf{P}=(P_1,\dotsc,P_m)}\wt(\textbf{P})=\sum_{\textbf{P}=(P_1,\dotsc,P_m)}t^{\level(\textbf{P})}q^{\beta(\textbf{P})}.
\end{equation}
By Lemma \ref{relate}, we have
\begin{align}
\sum_{T}t^{v(T)}q^{r(T)}
&=q^{-\sum_{1\leq i\leq j  \leq m}2(s_{i}+j-i-1)}t^{m(m+1)/2}\sum_{\textbf{P}=(P_1,\dotsc,P_m)}t^{-level(\textbf{P})}q^{\beta(\textbf{P})}\\
&=q^{-\sum_{1\leq i\leq j  \leq m}2(s_{i}+j-i-1)}t^{m(m+1)/2}\M(AR(t^{-1},q)),
\end{align}
and the theorem follows from Theorem \ref{weightrectangle} by letting $a=b=1$, $c=t^{-1}$, $d=q$, and replacing $q$ by $q^2$.
\end{proof}

\thispagestyle{headings}


\begin{thebibliography}{12}


\bibitem{Bosio}
F. Bosio and M. A. A. van Leeuwen, \textit{ A bijection proving the Aztec diamond theorem by combing lattice paths},
Elec. J. Combin. \textbf{20} (4) (2004), P24.

\bibitem{Brualdi}
R. Brualdi and S. Kirkland,\textit{ Aztec diamonds and digraphs, and Hankel determinants of Schr\"{o}der numbers},
J. Combin. Theory Ser. B  \textbf{94} (2) (2005), 334--351.





\bibitem{Cohn}
 H. Cohn, M. Larsen and J. Propp,
\emph{The Shape of a Typical Boxed Plane Partition},
New York J. of Math. \textbf{4} (1998), 137--165.



\bibitem{Elkies}
N. Elkies, G. Kuperberg, M.Larsen, and J. Propp,
\emph{Alternating-sign matrices and domino tilings}, J. Algebraic Combin. \textbf{1} (1992), 111--132,  219--234.

\bibitem{Eu}
S.-P. Eu and T.-S. Fu,
\textit{A simple proof of the Aztec diamond theorem}, Elec. J. Combin. \textbf{12} (2005), R18.



\bibitem{Gessel}
H. Helfgott and I. M. Gessel,
\emph{Enumeration of tilings of diamonds and hexagons with defects}, Electron. J. Combin. \textbf{6} (1999), R16.

\bibitem{Kamioka}
S. Kamioka,
\textit{Laurent biorthogonal polynomials, $q$-Naryana polynomials and domino tilings of the Aztec diamonds},
J. Combin. Theory Ser. A \textbf{123} (2014), 14--29.

\bibitem{Kuo}
E. H. Kuo. \newblock
\textit{Applications of graphical condensation for enumerating matchings and tilings},
Theor. Comput. Sci. \textbf{319} (2004), 29--57.

\bibitem{Tri}
T. Lai,
\textit{Enumeration of hybrid domino-lozenge tilings},  J. Combin. Theory Ser. A \textbf{122} (2014), 53--81.



\bibitem{Tri2}
T. Lai,
\textit{A generalization of Aztec diamond theorem, part I},  Electron. J. Combin. \textbf{21}(1) (2014), P1.51.




\bibitem{propp}
J. Propp,
\textit{Generalized domino-shuffling},
Theor. Comput. Sci. \textbf{ 303} (2003), 267--301.

\bibitem{Stanley}
R. Stanley,
\textit{Enumerative Combinatorics}, vol 2, Cambridge Univ. Press., 1999.



\end{thebibliography}
\end{document}